\newtheorem{thm}{Theorem}[section]
\newtheorem{Con}[thm]{Conjecture}
\newtheorem{cor}[thm]{Corollary}
\newtheorem{lem}[thm]{Lemma}
\newtheorem{pro}[thm]{Proposition}
\theoremstyle{definition}
\newtheorem{rem}[thm]{Remark}
\numberwithin{equation}{section}
\newcommand{\C}{\mathbb{C}} 
\newcommand{\D}{\mathbb{D}} 
\newcommand{\Q}{\mathbb{Q}} 
\newcommand{\X}{\mathbb{X}}
\newcommand{\Y}{\mathbb{Y}}
\newcommand{\ex}{\mathbb{E}}
\newcommand{\E}{\mathcal{E}}
\newcommand{\re}{\textup{Re}}
\newcommand{\im}{\textup{Im}}
\newcommand{\pr}{\mathbb{P}}
\newcommand{\GG}{\mathfrak{G}}
\newcommand{\F}{\mathcal{F}}
\newcommand{\ld}{\textup{Ld}}
\newcommand{\Lr}{\textup{Ld}(1, \X)}
\newcommand{\bound}{10^7}   
\begin{document}

\title{Small values of $\vert L^\prime/L(1,\chi) \vert$}

\author{Youness Lamzouri}
\author{ Alessandro Languasco}
%

\keywords{Dirichlet $L$-functions, extremal values}
\subjclass[2010]{Primary 11E45, 11M41.}

\begin{abstract}    
In this paper, we investigate the quantity $m_q:=\min_{\chi\ne \chi_0} \vert L^\prime/L(1,\chi)\vert$, as $q\to \infty$ over the primes, where $L(s,\chi)$ is the Dirichlet $L$-function attached to a non trivial Dirichlet character modulo $q$. Our main result shows that $m_q \ll \log\log q/\sqrt{\log q}$. We also compute $m_q$
 for every odd prime $q$ up to $\bound$. As a consequence we numerically verified that for every odd prime $q$,
 $3 \le q \le \bound$, we have
$c_1/q< m_q<5/\sqrt{q}$, with $c_1=21/200$.
 In particular, this shows that $L^\prime(1,\chi) \ne 0$  for every
non trivial Dirichlet character $\chi$ mod $q$ where $3\leq q\leq \bound$ is prime, answering a question of Gun, Murty and Rath in this range.
We  also provide some statistics and scatter plots regarding the $m_q$-values, see Section \ref{tables-figures}.
The programs used and the computational results described here
are available at the following web address:
\url{http://www.math.unipd.it/~languasc/smallvalues.html}.
\end{abstract} 
\maketitle 

\section{Introduction}
Let $q$ be an odd prime, $\chi$ be a
non-trivial Dirichlet character mod $q$, and  $L(s,\chi)$ be the Dirichlet $L$-function attached to $\chi$. 
We also denote by $\chi_0$ the trivial Dirichlet character mod $q$.
It is well known that the size  of the logarithmic derivative of $L(s,\chi)$ at $1$ is 
connected with the distribution of its non-trivial zeros; moreover, its average over non trivial characters
was recently studied by Ihara in his papers \cite{Ihara2006,Ihara2008} about the  \emph{Euler-Kronecker}
constant  for number fields. In particular, denoting
by $\zeta_q$  a primitive $q$-th root of unity and 
$\zeta_{\Q(\zeta_q)}(s)$ the Dedekind zeta-function of $\Q(\zeta_q)$,
 the expansion of $\zeta_{\Q(\zeta_q)}(s)$ near $s=1$ is
\[
\zeta_{\Q(\zeta_q)} (s) = \frac{c_{-1}}{s-1} + c_0 + O(s-1),
\]
and the Euler-Kronecker constant of $\Q(\zeta_q)$ is defined as  
\[
\lim_{s\to 1} \Bigl(\frac{\zeta_{\Q(\zeta_q)} (s)}{c_1} - \frac{1}{s-1}\Bigr)=
\frac{c_0}{c_{-1}}.
\]
Recalling that 
$\zeta_{\Q(\zeta_q)} (s)= \zeta(s) \prod_{\chi \neq \chi_0} L(s,\chi)$,
where $\zeta(s)$ is the Riemann zeta-function,
by logarithmic differentiation we immediately get that
the \emph{Euler-Kronecker constant
for the prime cyclotomic field $\Q(\zeta_q)$} is 
\begin{equation}
\label{EKq-def}
\GG_{q} 
: =
\gamma
+
\sum_{\chi \neq \chi_0}\frac{L^\prime}{L} (1,\chi).
\end{equation} 
The quantity $\GG_q$ is sometimes denoted by $\gamma_q$
but this conflicts with notations used in the literature.
Computational results on $\GG_{q}$ are developed in the papers of
 Ford-Luca-Moree \cite{FordLM2014} and Languasco 
 \cite{Languasco2019}.

These results motivate the study of extreme values of 
 $|L^\prime/L(1,\chi)|$ both theoretically and computationally.
Concerning the large values of $|L^\prime/L(1,\chi)|$, Ihara, Murty and Shimura \cite{IMS} proved that that under the assumption of the Generalised Riemann Hypothesis
we have \[
M_q:=\max_{\chi \neq \chi_0 \bmod q} \Bigl\vert \frac{L^\prime}{L} (1,\chi)\Bigr\vert\
 \leq (2+o(1))\log\log q. \]
On the other hand, by adapting the techniques of Lamzouri \cite{La}, one can show that if $q$ is a large prime then
\[M_q\geq (1+o(1))\log\log q.\]
Moreover, computational results on $M_q$ can be found
in Languasco \cite{Languasco2019} and in Languasco-Righi \cite{LanguascoR2020}.

In this paper, we investigate the small values of  $ |L^\prime/L(1,\chi)|$.
Define
\begin{equation}
\label{mq-def}
m_q:=\min_{\chi \neq \chi_0 \bmod q} \Bigl\vert \frac{L^\prime}{L} (1,\chi) \Bigr\vert\ .
\end{equation}
Then, we prove the following result 

\begin{thm}\label{Main}
Let $q$ be a large prime. Then, we have 
$$ m_q \ll \frac{\log\log q}{\sqrt{\log q}}.$$
In fact, there are at least $q(\log\log q)^2/\log q$ non-principal characters $\chi\bmod q$ such that 
$$\frac{L^\prime}{L} (1,\chi) \ll \frac{\log\log q}{\sqrt{\log q}}.$$ 
Moreover, the implicit constants are absolute and effective.
\end{thm}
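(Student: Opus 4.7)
The plan is to approximate $L^\prime/L(1,\chi)$ by a short Dirichlet polynomial and then use a probabilistic/Fourier-analytic study of the distribution of this polynomial over characters to produce the required supply of small values.

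First I would truncate: for $y = (\log q)^C$ with a sufficiently large constant $C$, the explicit formula for $\psi(x,\chi)$ applied at $s=1$ gives
\[
\frac{L^\prime}{L}(1,\chi) = -\sum_{n \leq y}\frac{\Lambda(n)\chi(n)}{n} + E(y,\chi),
\]
where $E(y,\chi)$ is controlled by the low-lying zeros of $L(s,\chi)$. Using a classical zero-density estimate (e.g.\ Montgomery--Huxley) one can discard at most $o(q)$ exceptional non-principal characters, and for all remaining ones one gets $|E(y,\chi)|\ll \log\log q/\sqrt{\log q}$.

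Set $A_y(\chi):=-\sum_{n\leq y}\Lambda(n)\chi(n)/n$. Since $y\leq q^{o(1)}$, the primes $\leq y$ are ``multiplicatively independent'' modulo $q$, and a quantitative equidistribution argument (via orthogonality and the large sieve) shows that the joint distribution of $(\chi(p))_{p\leq y}$ as $\chi$ ranges over non-principal characters mod $q$ is very close to that of i.i.d.\ random variables $(X_p)_{p\leq y}$ uniformly distributed on the unit circle. Consequently $A_y(\chi)$ is well approximated, in distribution on $\mathbb{C}$, by the random sum $\mathcal{A}_y := -\sum_{n\leq y}\Lambda(n)\widetilde{X}_n/n$, where $\widetilde{X}_{p^k}:=X_p^k$.

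The heart of the argument is then a Fourier-analytic computation showing that $\mathcal{A}_y$ has a continuous density $f_{\mathcal{A}_y}$ on $\mathbb{C}$ which is bounded above and below by positive absolute constants on a fixed neighborhood of the origin, uniformly in $y$. The characteristic function of $\mathcal{A}_y$ factors as a product of Bessel-type transforms (one for each prime) thanks to independence; it behaves as $\exp(-c|\xi|^2)$ for small $|\xi|$ because the variance $\sum_p (\log p)^2/p^2$ is finite, and a more delicate analysis on products of $J_0$-factors together with prime-counting bounds yields absolute integrability at infinity. Transferring this density estimate to $A_y(\chi)$ via the equidistribution comparison gives: for every $\delta$ with $1/\sqrt{\log q}\ll \delta \ll 1$, the number of non-principal $\chi \bmod q$ with $|A_y(\chi)|\leq \delta$ is at least $\gg \delta^2 q$. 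Choosing $\delta$ of size $\log\log q/\sqrt{\log q}$ then produces $\gg q(\log\log q)^2/\log q$ characters, each satisfying $|L^\prime/L(1,\chi)|\leq |A_y(\chi)|+|E(y,\chi)|\ll \log\log q/\sqrt{\log q}$. The main technical obstacle I expect is precisely the Fourier analysis of $\mathcal{A}_y$: proving a positive, $y$-uniform lower bound for the density at the origin and an absolutely integrable characteristic function requires careful bounds on products of Bessel-type integrals combined with the distribution of small primes.
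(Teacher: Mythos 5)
Your architecture --- truncate $L^\prime/L(1,\chi)$ to a short Dirichlet polynomial using a zero-density estimate, compare its distribution over $\chi$ to a probabilistic model built from independent uniform random variables on the unit circle, show the model's density is bounded below near the origin, and then transfer --- matches the paper's. The paper realizes the comparison through uniform moment asymptotics (Theorem \ref{MomentsLogDer}), a characteristic-function comparison (Theorem \ref{Characteristic}), and a Beurling--Selberg discrepancy bound (Theorem \ref{Discrepancy}); your appeal to ``orthogonality and the large sieve'' is a less precise sketch of that machinery, and it is exactly this quantitative discrepancy bound of size $(\log\log q)^2/\log q$ that produces the stated count of characters, so you would have to make it effective.

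The genuine gap is the positivity of the density at the origin. You correctly flag the Fourier analysis of $\mathcal{A}_y$ as the main technical obstacle, but the obstacle is more serious than you indicate: absolute integrability of the characteristic function, even with precise Bessel-type bounds for each prime factor, gives continuity of the density but not a positive lower bound at a chosen point. The characteristic function $\Phi_{\textup{rand}}$ is not manifestly nonnegative, so the inversion integral
\[
g(0,0) \;=\; \frac{1}{(2\pi)^2}\int_{\mathbb{R}^2}\Phi_{\textup{rand}}(u,v)\,du\,dv
\]
involves cancellation, and you would need to show that the Gaussian bulk near the origin (where $\Phi_{\textup{rand}}$ is close to $1$) dominates the oscillatory tail, uniformly in $y$. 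Your proposal does not outline how to achieve this. The paper sidesteps the issue entirely by invoking a structural theorem of Borchsenius and Jessen (Theorem \ref{BorJeThm}): a sum $\sum_n \lambda_n f(r_n\Y(n))$ with $\sum_n |\lambda_n| r_n$ divergent has an everywhere-positive density. Here $\lambda_p=\log p$, $r_p = 1/p$, $f(z)=z/(1-z)$, and the divergence of $\sum_p (\log p)/p$ is exactly what does the work (Corollary \ref{BorJeCor}). Without this theorem, or an equivalent infinite-convolution argument in the Jessen--Wintner style, $g(0,0)>0$ is not established, and that positivity is the engine of the whole deduction of Theorem \ref{Main} from Theorem \ref{Discrepancy}.
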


Theorem \ref{Main} gives the first known non-trivial upper bound for $m_q$. Furthermore, using the algorithm developed in Languasco-Righi \cite{LanguascoR2020} together with the results of Languasco \cite{Languasco2019}, we were able to compute the values of $m_q$ for $q\leq \bound$ and obtain the following computational result. 
\begin{thm}\label{comput-mq}
For  every odd prime $q$, $3 \le q \le \bound$,  we have
$c_1/q< m_q<5/\sqrt{q}$, with $c_1=21/200$. 
\end{thm}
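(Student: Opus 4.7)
The statement being purely computational, the plan reduces to an algorithmic one: for each odd prime $q$ with $3 \le q \le \bound$, compute $L'/L(1,\chi)$ for every non-principal character $\chi$ mod $q$, form the minimum $m_q$ of its modulus, and verify the two inequalities. The plan is to rely on the algorithm developed in Languasco--Righi \cite{LanguascoR2020} together with the formulas used in Languasco \cite{Languasco2019}, whose key ingredient is a Hurwitz-zeta/digamma expansion for $L(1,\chi)$ and $L'(1,\chi)$. Concretely, for non-principal $\chi$ mod $q$ one has expressions of the form $L(1,\chi) = -\frac{1}{q}\sum_{a=1}^{q-1} \chi(a)\, f(a/q)$ and an analogous formula for $L'(1,\chi)$ with a second kernel; because these are discrete convolutions of $\chi$ against kernels depending only on $a/q$, the values for all characters mod $q$ can be produced simultaneously via an FFT in time $O(q \log q)$ per prime.

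Given high-precision approximations of the two $L$-values for each non-principal $\chi$, I would take ratios to obtain $L'/L(1,\chi)$, compute absolute values, and sweep over $\chi$ to extract the minimum $m_q$. The upper bound $m_q < 5/\sqrt{q}$ is then a direct check on the minimum actually attained. The lower bound $c_1/q < m_q$ with $c_1 = 21/200$ is likewise a direct check; in particular it confirms that $L'(1,\chi) \ne 0$ for every non-principal $\chi$ mod $q$ in this range, settling the question of Gun, Murty and Rath up to $q \le \bound$.

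The main obstacle is numerical certification: the working precision must be large enough that the computed minimum is provably on the correct side of both inequalities. This is most delicate when $m_q$ happens to lie close to either endpoint, and in particular when $L'(1,\chi)$ is small (where one risks catastrophic cancellation in the ratio); the plan is to use rigorous error bounds (or interval arithmetic) and to re-run with higher precision whenever a certification is inconclusive. A secondary obstacle is sheer computational volume: there are $\pi(\bound)$ primes, each requiring up to $q-1$ character computations, so an efficient and parallelized implementation in the style of \cite{LanguascoR2020} is essential. The final tables of $m_q$-values and the certifying programs are to be made available at the URL stated in the abstract, making the verification independently reproducible.
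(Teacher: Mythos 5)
Your plan is essentially the approach the paper takes: FFT-driven evaluation of the character sums over $a/q$, splitting into odd and even characters, using the Languasco--Righi algorithm for the expensive kernel (the Deninger function $S(a/q)$), sweeping over $\chi$ to extract $m_q$, and verifying the two inequalities on the computed data. One small difference worth flagging: the paper evaluates $L'/L(1,\chi)$ directly via closed formulas (equations \eqref{mq-odd-def} and \eqref{mq-even-def} built from $\log\Gamma(a/q)$, $S(a/q)$, and $B_{1,\overline\chi}$) rather than computing $L(1,\chi)$ and $L'(1,\chi)$ separately and then dividing, which alleviates some of the cancellation concerns you raise. Also, the paper does not use interval arithmetic or formal certification as you propose; it works in 80-bit long double precision, then re-runs the $192$ cases where $m_q<10^{-5}$ in 128-bit quadruple precision as a consistency check. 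Your suggestion of rigorous interval arithmetic would in principle give a stronger guarantee, but it is not what the authors did; keep in mind that if you aim for genuine certification you must also propagate the truncation error in the infinite series defining $S(x)$, not just the floating-point rounding error.
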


In particular, the lower bound in Theorem \ref{comput-mq} 
implies the following
\begin{cor}\label{corollary-nonzero}
For  every odd prime $q$ up to $\bound$ 
and for every non-trivial Dirichlet character $\chi \bmod{q}$, we have
$L^\prime(1, \chi)\ne 0$.
\end{cor}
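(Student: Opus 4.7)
The plan is a direct contradiction argument from the lower bound in Theorem \ref{comput-mq}. The key idea is that $m_q > 0$ precludes the vanishing of $L^\prime(1,\chi)$, once we know that $L(1,\chi) \neq 0$ for every non-trivial $\chi$.

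First I would recall the classical non-vanishing $L(1,\chi) \neq 0$ for every non-trivial Dirichlet character $\chi$ modulo $q$, a consequence of Dirichlet's theorem on primes in arithmetic progressions; this ensures that the logarithmic derivative $L^\prime(1,\chi)/L(1,\chi)$ appearing in the definition \eqref{mq-def} of $m_q$ is a well-defined complex number for each $\chi \neq \chi_0$. Next, I would suppose for contradiction that some non-trivial $\chi \bmod q$, with $3 \le q \le \bound$ an odd prime, satisfies $L^\prime(1,\chi) = 0$: the quotient $L^\prime(1,\chi)/L(1,\chi)$ then vanishes at this particular character, and so the minimum $m_q$ taken over $\chi \neq \chi_0$ is zero. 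Finally, this contradicts the strict positivity $m_q > c_1/q$ with $c_1 = 21/200 > 0$ guaranteed by Theorem \ref{comput-mq} throughout the range.

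There is essentially no analytic obstacle to overcome at this stage, since all of the real work is encoded in the computational verification underpinning Theorem \ref{comput-mq}. The only point worth noting is that the lower bound produced there is \emph{strictly} positive for every odd prime up to $\bound$, so the single character excluded from the minimum (the principal one, $\chi_0$) plays no role in the deduction and the corollary drops out without further input.
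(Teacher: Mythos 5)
Your argument is exactly the paper's: the strict lower bound $m_q > c_1/q > 0$ from Theorem \ref{comput-mq} immediately rules out $L'/L(1,\chi) = 0$, and since $L(1,\chi)\neq 0$ for non-trivial $\chi$, this forces $L'(1,\chi)\neq 0$. Your write-up merely makes explicit the classical non-vanishing of $L(1,\chi)$, which the paper leaves implicit; otherwise the two proofs are identical.
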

Corollary \ref{corollary-nonzero} is connected with 
a conjecture of Gun, Murty and Rath  (see Conjecture 1.2 of
\cite{GunMR2011})
concerning the linear independence  over the algebraic closure of $\Q$ of   the
values $\log \Gamma (a/q)$, $1\le a \le q $, $(a,q)=1$.
In particular, letting
\[
Z_q := \Bigl\{ \alpha \colon \alpha = \frac{L^\prime}{L}(1,\chi)\
 \text{for some primitive character}\  \chi \bmod{q}  \Bigr\},
\]
Theorem  \ref{comput-mq} implies that $0\not \in Z_q$
for every odd prime $q$ up to $\bound$,
thus responding affirmatively to a question on page 6 of \cite{GunMR2011} in this range of $q$.

Theorem \ref{comput-mq} also suggests that the upper bound of Theorem \ref{Main}
is far from being optimal. In fact, the data on $m_q$ for $q\leq 10^7$ (see Figures \ref{fig1}, \ref{fig2} and \ref{fig3} at the end of the paper) show a remarkable fit between the maximal and minimal values of $m_q$, and the curves $b_1/\sqrt{q}$ and $c_1/q$ respectively, for some constant $b_1>0$. Based on this we make the following conjecture 
\begin{Con}\label{truth-mq}
For all $\varepsilon>0$ and for all odd primes $q$ we have 
$$ q^{-1-\varepsilon} \ll_{\varepsilon} m_q \ll_{\varepsilon} q^{-1/2+\varepsilon}.$$
In particular, for all odd primes $q$, $0\notin Z_q$.
\end{Con}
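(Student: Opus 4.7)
The plan is to build a probabilistic model for $L'/L(1,\chi)$ analogous to the Granville--Soundararajan model for $L(1,\chi)$, and then carry out a ``small-ball'' analysis. Starting from
\[
-\frac{L'}{L}(1,\chi) = \sum_{n=1}^\infty \frac{\Lambda(n)\chi(n)}{n} = \sum_p \frac{\chi(p)\log p}{p-\chi(p)},
\]
the model replaces $\chi(p)$ by independent random variables $X(p)$ uniformly distributed on the unit circle and sets
\[
Y := \sum_p \frac{X(p)\log p}{p-X(p)}.
\]
The preliminary step is to show that $Y$ converges almost surely to a $\C$-valued random variable whose density $f_Y$ is smooth and strictly positive at the origin. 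The conjectured upper bound then amounts to transferring the small-ball estimate $\pr(|Y|<R)\asymp R^2 f_Y(0)$ to the arithmetic side.

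The decisive ingredient is a joint-moment transfer. By orthogonality of characters, for fixed non-negative integers $j,\ell$ I would establish
\[
\frac{1}{q-2}\sum_{\chi\neq \chi_0}\Bigl(\frac{L'}{L}(1,\chi)\Bigr)^{\!j}\,\overline{\Bigl(\frac{L'}{L}(1,\chi)\Bigr)^{\!\ell}} = \ex\bigl[Y^j\overline{Y}^\ell\bigr] + E_q(j,\ell),
\]
with $E_q(j,\ell)$ negligible provided $j+\ell$ does not grow too fast with $q$. One truncates the Dirichlet series at a threshold $y=q^{\delta/(j+\ell)}$, bounds the tail (at worst conditionally on GRH), and absorbs the single ramified prime $p=q$. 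This mirrors the moment computations already used to produce the lower bound $M_q\ge (1+o(1))\log\log q$ quoted in the excerpt.

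With the joint moments in hand, a Fourier/Plancherel argument counts characters in a small disk: approximating the indicator of $\{|z|<R\}$ by a smooth compactly-supported $\ph$ with rapidly decaying Fourier transform, one obtains
\[
\#\{\chi\neq\chi_0 : |L'/L(1,\chi)|<R\}\approx (q-2)\int_{\C}\ph(z)\,f_Y(z)\,dA(z)\asymp (q-2)R^2 f_Y(0),
\]
so the right-hand side exceeds $1$ as soon as $R\gg q^{-1/2}$. This would give $m_q\ll_\varepsilon q^{-1/2+\varepsilon}$, with the loss $q^\varepsilon$ coming from the limited range of usable moments. Note that this also automatically improves upon Theorem \ref{Main}, whose $\log\log q/\sqrt{\log q}$ reflects a much shorter truncation.

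The hard part --- and the reason the conjecture remains open --- is the lower bound $m_q\gg_\varepsilon q^{-1-\varepsilon}$. This is a quantitative non-vanishing question for $L'(1,\chi)$ of the same depth as zero-repulsion problems for $L$-functions, and no purely analytic route seems within reach. A conceivable attack is to use an identity expressing $L'(1,\chi)$ as a $\overline{\Q}$-linear combination of $\log\Gamma(a/q)$, $1\le a\le q-1$ (precisely the setting of Gun--Murty--Rath), and then feed this into a Baker-type linear-forms-in-logarithms estimate to force $|L'/L(1,\chi)|\gg q^{-1-\varepsilon}$. Making such transcendence input both effective and uniform in $q$ is exactly what is currently out of reach, and I expect this to be the main obstacle.
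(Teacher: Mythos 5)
This statement is a \emph{conjecture}, not a theorem: the paper offers no proof of it, only the empirical observation that the computed values $m_q$ for $q\le 10^7$ fit remarkably well between the curves $c_1/q$ and $b_1/\sqrt{q}$ (see the discussion around Figures~\ref{fig1}--\ref{fig3}). Your proposal, by contrast, sketches an actual route to the upper bound, so the right question is whether that sketch is sound. It is not.

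The gap is in the small-ball step. You set up precisely the moment-matching/discrepancy machinery that the paper uses (Theorems~\ref{MomentsLogDer}, \ref{Characteristic}, \ref{Discrepancy}), and you claim that the approximation
$\#\{\chi : |L'/L(1,\chi)|<R\}\approx (q-2)R^2 f_Y(0)$
holds with an error small enough that the count exceeds $1$ once $R\gg q^{-1/2}$, attributing only a ``$q^\varepsilon$'' loss to the limited range of usable moments. This wildly overestimates the precision of the method. The approximation of $L'/L(1,\chi)$ by a Dirichlet polynomial of length $y$ needs $y\gg (\log q)^{C}$, and orthogonality then requires $y^{k+\ell}\le q$, forcing $k+\ell\ll \log q/\log\log q$. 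With moments only up to that order, the Taylor expansion of the characteristic function $\Phi_q(u,v)$ is controlled only for $|u|,|v|\ll \log q/(\log\log q)^2$, and the Beurling--Selberg step then yields a discrepancy bounded below by roughly the reciprocal of that range, namely $\gg(\log\log q)^2/\log q$. This is \emph{exactly} Theorem~\ref{Discrepancy}, and it is why Theorem~\ref{Main} gives $m_q\ll \log\log q/\sqrt{\log q}$ and nothing stronger: the small-ball bound $\pr(|Y|<R)\asymp R^2$ can only be transferred down to $R$ of that size, not down to $R\asymp q^{-1/2+\varepsilon}$, which would require a discrepancy of quality $q^{-1+\varepsilon}$, a power saving that this method cannot produce even under GRH. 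The paper itself flags this: ``Theorem~\ref{comput-mq} also suggests that the upper bound of Theorem~\ref{Main} is far from being optimal.'' So your upper-bound route reproves Theorem~\ref{Main}, not the conjectured $q^{-1/2+\varepsilon}$.

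Your assessment of the lower bound as the genuinely hard, transcendence-flavoured obstruction is reasonable and consistent with the paper's framing via the Gun--Murty--Rath question, but again no proof is offered or currently available. In short, the statement has no proof in the paper (it is numerically motivated), and the piece of your sketch that claims to prove the upper bound half fails at the quantitative discrepancy step.
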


In order to prove Theorem \ref{Main}, our idea consists of studying the distribution of $L^\prime/L(1, \chi)$ as $\chi$ varies among non-principal characters modulo $q$. Indeed, we shall compare this distribution to that of an adequate probabilistic random model, which we construct as follows. Let $\{\X(p)\}_p$ be a sequence of independent random variables, indexed by the primes, and uniformly distributed on the unit circle. We extend the $\X(p)$ multiplicatively, by putting $\X(n)=\prod_{i=1}^k \X(p_i)^{a_i}$ if the prime factorization of $n$ is $n=\prod_{i=1}^k p_i^{a_i}$.   We now consider the random sum
\begin{equation}\label{SumRandomInd}
\textup{Ld}(1, \X):=- \sum_{n=1}^{\infty}\frac{\Lambda(n)\X(n)}{n}= \sum_{p}\frac{(\log p)\X(p)}{p-\X(p)},\end{equation}
where $\Lambda(n)$ denotes the von Mangoldt function.
Since $\ex(\X(n))=0$ for all $n>1$, and $\sum_{n\geq 2}\Lambda(n)^2/n^2<\infty$, it follows from Kolmogorov's three series theorem that $\Lr$ is almost surely convergent. Ihara, Murty and Shimura \cite{IMS} proved that as $q\to \infty$ through primes, the distribution of $L^\prime/L(1, \chi)$ as $\chi$ varies over non-principal characters modulo $q$, converges to that of $\ld(1, \X)$. More precisely, for any rectangle $\mathcal{R}\subset \mathbb{C}$ we have 
\begin{equation}\label{IhMuSh}
\lim_{q\to \infty} \frac{1}{q-1} \Bigl|\Bigl\{\chi\neq \chi_0 \bmod q : \frac{L^\prime}{L} (1,\chi)\in \mathcal{R} \Bigr\}\Bigr|= \pr\left(\Lr\in \mathcal{R}\right).
\end{equation}
In order to gain an understanding of how small $L^\prime/L(1, \chi)$ can be, we shall improve the results of Ihara, Murty and Shimura, by bounding the ``\emph{discrepancy}'' of the distribution of  
$L^\prime/L(1, \chi)$, which we define as 
$$
\D(q): = \sup_{\mathcal{R}} \left| \frac{1}{q-1} \Bigl|\Bigl\{\chi\neq \chi_0 \bmod q : \frac{L^\prime}{L} (1,\chi)\in \mathcal{R} \Bigr\}\Bigr|- \pr\left(\Lr\in \mathcal{R}\right)\right|,
$$
where the supremum is taken over all rectangles (possibly unbounded) of the complex plan with sides parallel to the coordinate axes. 
Using the approach of Lamzouri, Lester and  
Radziwi\l\l\ \cite{LLR}, we prove the following result, from which we shall deduce Theorem \ref{Main}.
\begin{thm}\label{Discrepancy}
Let $q$ be a large prime. Then we have 
$$ \D(q) \ll \frac{(\log\log q)^2}{\log q}.$$
\end{thm}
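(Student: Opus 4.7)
The strategy is to follow the approach of Lamzouri, Lester and Radziwi\l\l\ \cite{LLR}, which bounds a discrepancy by a Fourier-analytic comparison of the complex characteristic functions of the two distributions together with a moment-matching argument based on the orthogonality of Dirichlet characters.

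First I would truncate the Dirichlet series. Set
$$S_y(\chi):=-\sum_{n\le y}\frac{\Lambda(n)\chi(n)}{n},\qquad S_y(\X):=-\sum_{n\le y}\frac{\Lambda(n)\X(n)}{n},$$
for a parameter $y$ to be optimized. One must show that $L^\prime/L(1,\chi)-S_y(\chi)$ is small outside an exceptional set of characters of size $\ll q/(\log q)^A$; this should follow from mean-value estimates for the tail sum combined with a log-free zero-density estimate to handle those $\chi$ whose $L$-function has a zero close to $s=1$. The random tail $\Lr-S_y(\X)$ has $L^2$-norm of order $1/\sqrt{\log y}$ and is thus small in probability by Chebyshev.

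Next I would apply a two-dimensional Beurling--Selberg smoothing inequality. For any axis-parallel rectangle $\mathcal R\subset\C$,
$$\Bigl\lvert\tfrac{1}{q-1}\#\bigl\{\chi\ne\chi_0:S_y(\chi)\in\mathcal R\bigr\}-\pr\bigl(S_y(\X)\in\mathcal R\bigr)\Bigr\rvert\ll\frac{1}{T}+\iint_{\lvert s\rvert,\lvert t\rvert\le T}\bigl\lvert\Phi_q(s,t)-\Phi(s,t)\bigr\rvert\,\frac{ds\,dt}{(1+\lvert s\rvert)(1+\lvert t\rvert)},$$
where $\Phi_q$ and $\Phi$ are the complex characteristic functions of $S_y(\chi)$ (averaged over non-principal $\chi\bmod q$) and of $S_y(\X)$ respectively, and $T$ is a smoothing parameter chosen of size $\asymp\sqrt{\log q/\log\log q}$. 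The decisive step is moment matching: expanding $\Phi_q(s,t)$ and $\Phi(s,t)$ into their Taylor series and using the orthogonality relation
$$\frac{1}{q-1}\sum_{\chi\ne\chi_0}\chi(m)\overline\chi(n)=\mathbf 1_{m\equiv n\,(q)}-\frac{1}{q-1},$$
the mixed moment of degree $(j,k)$ of $S_y(\chi)$ coincides with that of $S_y(\X)$ up to an error of order $y^{j+k}/q$, because for $m,n\le y^{j+k}<q$ the congruence $m\equiv n\pmod q$ forces $m=n$. Taking $j+k$ up to $c\log q/\log y$, the two Taylor series agree to super-polynomial precision on the polydisc $\lvert s\rvert,\lvert t\rvert\le T$.

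Finally, one optimizes: the main contribution $1/T^2\asymp(\log\log q)/\log q$ comes from the Beurling--Selberg smoothing, while the truncation errors contribute an additional factor $\log\log q$, yielding the asserted bound. The principal technical obstacle I expect is obtaining sufficiently strong decay estimates $\lvert\Phi(s,t)\rvert\ll\exp(-c(\lvert s\rvert^2+\lvert t\rvert^2)/\log y)$ for moderately large $s,t$, together with the corresponding bound for $\Phi_q$ away from the origin; these are needed to discard the tail of the two-dimensional integral and reduce the problem to a small neighborhood of the origin, where the moment-matching Taylor expansion converges rapidly. A second delicate point is the initial truncation of $L^\prime/L(1,\chi)$ with a quantitative exceptional set, which requires a careful unconditional treatment of the zeros of $L(s,\chi)$ close to the line $\re(s)=1$.
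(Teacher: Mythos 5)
Your overall strategy---truncate $L'/L(1,\chi)$ to a short Dirichlet polynomial outside a zero-density exceptional set, match complex moments via orthogonality of characters, and pass from characteristic-function agreement to a discrepancy bound using a Beurling--Selberg majorant/minorant---is exactly the paper's route. However, the quantitative parameters you propose are off by a large factor and would not yield the stated bound.

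The key error is the choice $T\asymp\sqrt{\log q/\log\log q}$ together with the claim that the smoothing contributes $1/T^2$. The Beurling--Selberg smoothing of a one-dimensional indicator on a scale $T$ incurs an error of size $1/T$ (not $1/T^2$), and this persists in the two-dimensional product construction (compare \eqref{K bd}--\eqref{two} in the paper, where the final error is $O(1/\Delta)$). With your $T$ this gives $\D(q)\ll\sqrt{\log\log q/\log q}$, which is far larger than the target. To reach $(\log\log q)^2/\log q$ one must take $T\asymp\log q/(\log\log q)^2$, as the paper does with $\Delta=b_0(\log q)/(\log\log q)^2$; this is why Theorem~\ref{Characteristic} establishes agreement of the characteristic functions on the much larger range $|u|,|v|\le b_0(\log q)/(\log\log q)^2$. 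The moment-matching argument can support this: since $|L'/L(1,\chi)|\ll\log\log q$ outside the exceptional set, the Taylor tail beyond degree $N$ is controlled once $N\gg T\log\log q\asymp\log q/\log\log q$, and moments of degree up to $\log q/(50\log\log q)$ are handled by Theorem~\ref{MomentsLogDer} (equivalently $y^{j+k}\le q$ with $y$ a power of $\log q$). Your bookkeeping stops well short of this, which is where the gap lies.

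Two smaller remarks. First, with the paper's construction the tail of the $(u,v)$-integral is eliminated for free because $\widehat F_{\mathcal I,\Delta}$ is compactly supported in $[-\Delta,\Delta]$ (see \eqref{Fourier}); you do not actually need a pointwise decay estimate for $\Phi_q$ away from the origin, only for $\Phi_{\textup{rand}}$ (Proposition~\ref{CharDecTrunc}), and only to show the random density is bounded so that \eqref{two} holds. Second, you should also account for the exceptional real character $\chi_1$ (if it exists) when forming moments, as the paper does by working over $\F_q$: a single exceptional $\chi$ with $L'/L(1,\chi_1)\gg(\log q)^2$ would dominate high moments of order $N\asymp\log q/\log\log q$, so it must be removed before averaging, not merely discarded via a density bound.
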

To establish \eqref{IhMuSh}, Ihara, Murty and Shimura investigated the moments of $L^\prime/L(1, \chi)$. For any positive integer $k$, we define 
\begin{equation}\label{DefLambda}
\Lambda_{k}(n)=\sum_{\substack{n_1,n_2,\dots,n_k\geq 1\\ n_1n_2\cdots n_k=n}}\Lambda(n_1)\Lambda(n_2)\cdots\Lambda(n_k).
\end{equation}
Then for all complex numbers $s$ with $\re(s)>1$
we have
$$ \Bigl(\frac{L^\prime}{L}(s,\chi)\Bigr)^k=(-1)^k\sum_{n=1}^{\infty} \frac{\Lambda_k(n)}{n^s}\chi(n).$$
Ihara, Murty and Shimura proved (see Theorem 5 of \cite{IMS})  that for all fixed integers $k, \ell \geq 1$ and for all $\varepsilon>0$ we have
\begin{equation}\label{IMSMoments}
\frac{1}{q-1}\sum_{\chi\neq \chi_0 \bmod q} \Bigl(\frac{L^\prime}{L}(1,\chi)\Bigr)^k\Bigl(\overline{\frac{L^\prime}{L}(1,\chi)}\Bigr)^{\ell}= (-1)^{k+\ell}\sum_{n=1}^{\infty} \frac{\Lambda_k(n) \Lambda_{\ell}(n)}{n^2} +O_{k, \ell, \varepsilon}(q^{\varepsilon-1}).
\end{equation}
Note that the main term of this asymptotic formula equals the corresponding moments of the probabilistic random model. Indeed, since $\ex(\X(n)\overline{\X(m)})=1$ if $m=n$ and equals $0$ otherwise, then for all $k, \ell\geq 1$ we have 
\begin{equation}\label{ExplicitRandomMoments}
 \ex\left(\ld(1, \X)^k\overline{\ld(1, \X)}^{\ell}\right)= (-1)^{k+\ell}\sum_{n=1}^{\infty} \frac{\Lambda_k(n) \Lambda_{\ell}(n)}{n^2}.
 \end{equation}
 \renewcommand*{\thefootnote}{\fnsymbol{footnote}}
 \setcounter{footnote}{1}
Moreover, the factor $q^{\varepsilon}$ in the error term of \eqref{IMSMoments} is due to the possible ``\emph{exceptional}'' character modulo $q$\footnote{By an exceptional character modulo a prime $q$, we mean the unique real character  $\chi_1$ (if it exists) such that  $L(s, \chi_1)$ has a zero $\rho$ with $\re(\rho) > 1-c/\log(q)$, where $c > 0$ is a fixed small constant independent of $q$.}. In order to prove Theorem \ref{Discrepancy}, we need to show that the asymptotic formula \eqref{IMSMoments} holds uniformly for $k, \ell \ll (\log q)/\log\log q$. To this end, we need to remove the possible contribution of the exceptional character $\chi_1$, as it will heavily affect the moments. 
Let 
\begin{equation}
\label{Fq-def}
\F_q:= \{ \chi\neq \chi_0 \bmod \ q : \chi \text{ is not exceptional}\}.
\end{equation}
Note that $q-2\leq |\F|\leq q-1$. 
We establish the following result, which improves \eqref{IMSMoments}.
\begin{thm}\label{MomentsLogDer}
Let $q$ be a large prime. For all positive integers $k, \ell \leq \log q/(50\log\log q)$ we have  
$$\frac{1}{q-1}\sum_{\chi\in \F_q} \Bigl(\frac{L^\prime}{L}(1,\chi)\Bigr)^k\Bigl(\overline{\frac{L^\prime}{L}(1,\chi)}\Bigr)^{\ell}= \ex\left(\ld(1, \X)^k\overline{\ld(1, \X)}^{\ell}\right)+ O\left(q^{-1/30}\right) .$$
\end{thm}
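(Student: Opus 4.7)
The plan is to compare both sides with the moments of a common truncated Dirichlet polynomial, evaluated via character orthogonality. Set
\[T_y(\chi) := -\sum_{n \leq y}\frac{\Lambda(n)\chi(n)}{n},\qquad T_y(\X) := -\sum_{n \leq y}\frac{\Lambda(n)\X(n)}{n},\]
where the truncation parameter $y$ is chosen small enough that $y^{\max(k,\ell)} < q^{1-\eta}$ for some fixed $\eta > 0$, yet large enough that the tails on both sides are under control. The hypothesis $k,\ell \leq \log q/(50\log\log q)$ is what makes such a balanced choice possible, and forces $y$ to be at most a small positive power of $q$.

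The central step is the orthogonality computation. Expanding
\[T_y(\chi)^k\overline{T_y(\chi)}^\ell = (-1)^{k+\ell}\sum_{n,m}\frac{\Lambda_k^{(y)}(n)\Lambda_\ell^{(y)}(m)}{nm}\chi(n)\overline{\chi(m)},\]
where $\Lambda_j^{(y)}(n) := \sum_{n=n_1\cdots n_j,\,n_i\leq y}\Lambda(n_1)\cdots\Lambda(n_j)$ is supported on $n\leq y^j$, and averaging over $\chi \bmod q$, character orthogonality forces $n\equiv m \pmod q$ with $(nm,q)=1$. Since $y^{\max(k,\ell)} < q$, this collapses to $n=m$, giving the diagonal $\sum_n\Lambda_k^{(y)}(n)\Lambda_\ell^{(y)}(n)/n^2$. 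The identical computation for the random model (using $\ex[\X(n)\overline{\X(m)}]=\mathbf{1}_{n=m}$) produces the same diagonal, so $\frac{1}{q-1}\sum_{\chi\in\F_q}T_y(\chi)^k\overline{T_y(\chi)}^\ell = \ex[T_y(\X)^k\overline{T_y(\X)}^\ell] + O(q^{-1+o(1)})$; this error absorbs the principal-character contribution, the exceptional-character correction, and the discrepancy between $1/(q-1)$ and $1/|\F_q|$, each of which is bounded using the trivial estimate $|T_y(\chi)| \leq \log y$.

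Next one handles the tails on the two sides. On the random side, exploit independence: $T_y(\X)$ depends only on $\{\X(p): p\leq y\}$ while $R_y(\X) := \ld(1,\X)-T_y(\X)$ depends only on $\{\X(p): p > y\}$. Expanding $\ld^k\overline{\ld}^\ell$ binomially and using that $\ex[R_y] = 0$ and $\ex[R_y^2] = 0$ (the latter because $\ex[\X(n)\X(m)]=0$ unless $n = m = 1$), the lowest non-vanishing cross term is $(j,i)=(1,1)$, which is $k\ell\,\ex[T_y^{k-1}\overline{T_y}^{\ell-1}]\ex[|R_y|^2]$; together with $\ex[|R_y|^2]\asymp\log y/y$ and the sub-Gaussian moment bound $\ex[|T_y|^{2m}]\ll(Cm)^m$, this can be made $O(q^{-1/30})$. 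On the character side, the analogous replacement of $L^\prime/L(1,\chi)$ by $T_y(\chi)$ is justified by Perron's formula applied to $-L^\prime/L(s,\chi)=\sum\Lambda(n)\chi(n)/n^s$; shifting the contour into the classical zero-free region $\re(s)\geq 1-c/\log q$ (which holds in full for $\chi\in\F_q$) produces a residual term controlled by the density of non-trivial zeros, and high-moment averages of the residual over $\F_q$ are bounded by a second orthogonality argument applied to tail character sums of $\Lambda(n)\chi(n)$.

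The main obstacle lies in reconciling the conflicting demands on $y$: orthogonality forces $y^{\max(k,\ell)} < q$, hence $y$ to be subpolynomial in $q$, while both tail bounds prefer $y$ larger to make $\log y/y$ and the residue contribution negligible. The range $k,\ell\leq \log q/(50\log\log q)$ is calibrated precisely so that a common choice of $y$ makes all three errors of size $O(q^{-1/30})$ simultaneously. Moreover, the exclusion of the exceptional character via $\F_q$ is indispensable for the zero-free-region contour shift on the character side: a Siegel zero near $s=1$ would contribute a residue of size comparable to the main term, which cannot be absorbed into the error once raised to the $k$-th power.
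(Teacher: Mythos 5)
Your approach has a genuine quantitative gap, stemming from a key structural difference from the paper's argument. The paper applies Perron's formula to $(L^\prime/L(1+s,\chi))^k$ directly and shifts the contour, obtaining
\[
\Bigl(\frac{L^\prime}{L}(1,\chi)\Bigr)^k = (-1)^k \sum_{n\le y^k}\frac{\Lambda_k(n)}{n}\chi(n) + O_\delta\bigl(y^{-k\delta/4}\bigr),
\]
with the full $\Lambda_k(n)$ and, crucially, with $y^k=q$, so the error is $q^{-\delta/4}$, a genuine power of $q$. You instead apply Perron to $L^\prime/L$ alone to get a length-$y$ polynomial $T_y(\chi)$ and then raise it to the $k$-th power. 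The orthogonality step then forces $y^{\max(k,\ell)}<q$, which for $k,\ell$ as large as $\log q/(50\log\log q)$ pins $y\lesssim (\log q)^{50}$. This is where the argument breaks.

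On the random side, your $(j,i)=(1,1)$ cross term is $k\ell\,\ex[T_y^{k-1}\overline{T_y}^{\ell-1}]\,\ex[|R_y|^2]$. The factor $\ex[|R_y|^2]\asymp \log y/y$ is then only polylogarithmically small, roughly $\log\log q/(\log q)^{50}$. Meanwhile $\ex[|T_y|^{2k-2}]$, even with the sharp bound $(c\log k)^{2k-2}$, is of size $\exp\bigl(O(\log q\,\log\log\log q/\log\log q)\bigr)$ when $k\asymp \log q/\log\log q$. Since $\log q\,\log\log\log q/\log\log q$ eventually exceeds any fixed multiple of $\log\log q$, this factor eventually swamps $(\log q)^{-50}$, and the product is not $O(q^{-1/30})$ — in fact it tends to infinity. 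The claimed ``calibration'' making all three errors simultaneously $O(q^{-1/30})$ does not exist: the orthogonality constraint forces $y\le (\log q)^{50+o(1)}$ while the tail bound would need $y\gg q^{1/30+o(1)}$. The identical problem occurs on the arithmetic side when you expand $(T_y+E)^k$; the term $k\,T_y^{k-1}E$ with $|T_y|\le \log y$ and $|E|\ll y^{-\delta/4}$ is again not a power of $q$. A secondary issue: shifting the Perron contour into the \emph{classical} zero-free region $\re(s)>1-c/\log q$ yields a saving of only $y^{-c/\log q}\asymp 1$, which is useless; one needs the much wider region $\re(s)>1-\delta$ with $\delta$ fixed, which the paper obtains for all but $O(q^{3/4})$ characters via Montgomery's zero-density estimate. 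This is precisely why the paper truncates the $k$-th power (error $q^{-\delta/4}$, and a length-$q$ polynomial whose diagonal matches the random-model moment up to a genuinely negligible $q^{-1/2}$ tail) rather than the first power.
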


The plan of the paper is as follows. In Section \ref{RandomSec} we shall investigate the distribution of the random model $\ld(1, \X)$, and deduce Theorem \ref{Main} from Theorem \ref{Discrepancy}. In Section \ref{MomentsSec} we establish Theorem \ref{MomentsLogDer}, which gives asymptotic formulas for large moments of $L^\prime/L(1, \chi)$. These are then used in Section \ref{DiscrepancySec} to show that the characteristic function of $L^\prime/L(1, \chi)$ is very close to that of the probabilistic random model $\ld(1, \X)$. Theorem \ref{Discrepancy} will be deduced from this result using Beurling-Selberg polynomials. In Section \ref{CompSec}, we shall  present the numerical approach
we use to prove Theorem \ref{comput-mq}. Finally, in Section \ref{tables-figures}, located after the References, we shall
insert some tables and figures.

  \medskip 
\textbf{Acknowledgements}. 
The second author (A. Languasco) would like 
to thank Luca Righi (University of Padova) for his help in organising the computation
described in Section \ref{CompSec} on the
University of Padova Strategic Research Infrastructure 
Grant 2017: ``CAPRI: Calcolo ad Alte Prestazioni per la Ricerca e l'Innovazione'',
\url{http://capri.dei.unipd.it}.


\section{The distribution of $\ld(1, \X)$, and the deduction of Theorem \ref{Main}} \label{RandomSec}

The characteristic function of the joint distribution of $\re (\Lr)$ and $\im(\Lr)$ is defined by
\begin{equation}\label{DefPhiRand}
\Phi_{\textup{rand}}(u, v):= \ex\Big(\exp(iu \re (\Lr) + iv \im(\Lr)\big)\Big),
\end{equation}
for  $u, v\in \mathbb{R}$. By \eqref{SumRandomInd} it follows that 
$$ \Phi_{\textup{rand}}(u, v) =\prod_{p} \Phi_{\textup{rand}}(u, v; p), $$
where 
$$
\Phi_{\textup{rand}}(u, v; p):=\ex\left(\exp\left(iu \re \frac{(\log p)\X(p)}{p-\X(p)} + iv \im\frac{(\log p)\X(p)}{p-\X(p)} \right)\right).
$$
We first show that $\Phi_{\textup{rand}}(u, v)$ is rapidly decreasing as $|u|, |v|\to \infty$. 
\begin{pro}\label{CharDecTrunc} There exists a constant $c_0>0$ such that for all $u, v\in \mathbb{R}$ such that $|u|, |v|\geq 2$ we have 
$$
 \Phi_{\textup{rand}}(u, v) \ll \exp\big(-c_0 (|u|+|v|)\big).
$$
\end{pro}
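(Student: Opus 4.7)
The plan is to exploit the factorization $\Phi_{\textup{rand}}(u,v) = \prod_p \Phi_{\textup{rand}}(u,v;p)$ and extract exponential decay from a single dyadic window of primes $p \asymp (|u|+|v|)\log(|u|+|v|)$. Set $r := |u|+|v|$ and $w := u - iv$, so that writing $\X(p) = e^{i\theta}$ the exponent in $\Phi_{\textup{rand}}(u,v;p)$ equals $\re\bigl(w f_p(\theta)\bigr)$, where $f_p(\theta) = (\log p)e^{i\theta}/(p-e^{i\theta})$. The geometric series expansion
\[
f_p(\theta) = \frac{\log p}{p}\,e^{i\theta} + \log p \sum_{k\geq 2}\frac{e^{ik\theta}}{p^k}
\]
gives $\re(w f_p(\theta)) = (|w|\log p/p)\cos(\theta - \arg \bar w) + R_p(\theta)$ with $|R_p(\theta)| \leq 2r\log p/p^2$. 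Integrating over $\theta$, using $e^{iR_p(\theta)} = 1 + O(|R_p(\theta)|)$ together with the identity $\int_0^{2\pi} e^{ix\cos\theta}\,d\theta = 2\pi J_0(x)$, yields
\[
\Phi_{\textup{rand}}(u,v;p) = J_0\!\Bigl(\frac{|w|\log p}{p}\Bigr) + O\!\Bigl(\frac{r\log p}{p^{2}}\Bigr),
\]
with an absolute implied constant.

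I would then restrict attention to primes in a dyadic window $[T, 2T]$ with $T = C r\log r$ and $C$ a large absolute constant chosen so that $|w|\log p/p \leq 1/10$ throughout. The Taylor expansion $J_0(x) = 1 - x^2/4 + O(x^4)$, together with the observation that the additive error $r\log p/p^2$ is negligible compared to $(r\log p/p)^2$ once $r\log p$ is large, gives, for each $p$ in the window,
\[
|\Phi_{\textup{rand}}(u,v;p)| \leq 1 - c\Bigl(\frac{r\log p}{p}\Bigr)^2 \leq \exp\!\Bigl(-c\,\frac{r^2(\log p)^2}{p^2}\Bigr).
\]
Discarding the trivial factors $|\Phi_{\textup{rand}}(u,v;p)| \leq 1$ for $p \notin [T,2T]$ and summing via the elementary estimate $\sum_{T \leq p \leq 2T}(\log p)^2/p^2 \gg (\log T)/T \gg 1/r$, one obtains
\[
|\Phi_{\textup{rand}}(u,v)| \leq \prod_{T \leq p \leq 2T}|\Phi_{\textup{rand}}(u,v;p)| \leq \exp(-c\,r^2 \cdot 1/r) = \exp(-cr),
\]
as required. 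For $r$ in a bounded range, the nondegeneracy of $\ld(1,\X)$ gives $|\Phi_{\textup{rand}}(u,v)| < 1$ uniformly, and this compact region is absorbed by shrinking the constant $c_0$.

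The main obstacle is calibrating the dyadic window. One needs $T$ large enough that the Taylor approximation of $J_0$ applies at $x = |w|\log p/p$ and that the additive error $O(r\log p/p^2)$ is dominated by the Taylor correction $c(r\log p/p)^2$, which forces $T \gtrsim r\log r$. At the same time $T$ must be small enough that $\sum_{T \leq p \leq 2T}(\log p)^2/p^2 \gtrsim 1/r$, which still holds at $T \asymp r\log r$. The choice $T = Cr\log r$ with $C$ a sufficiently large absolute constant simultaneously meets both requirements, and the remaining ingredients (Mertens-type prime sums, the integral representation of $J_0$, and routine Taylor estimates) are classical.
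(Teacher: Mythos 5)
Your proposal is correct and follows essentially the same route as the paper: factorize $\Phi_{\textup{rand}}$ over primes, trivially bound the factors at small primes by $1$, Taylor-expand the remaining factors to expose a $-\,(u^2+v^2)\log^2 p/(4p^2)$ decrement, and sum over primes of size $\asymp (|u|+|v|)\log(|u|+|v|)$ to accumulate decay $\exp(-c_0(|u|+|v|))$. The only cosmetic differences are that you isolate a single dyadic window rather than retaining the entire tail $p > X$, and you phrase the second-moment computation through the identity $\int_0^{2\pi}e^{ix\cos\theta}\,d\theta = 2\pi J_0(x)$ rather than taking expectations term-by-term in the Taylor expansion of the exponential; both choices produce the same leading term and are fully equivalent.
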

\begin{proof}
First, note that for all primes $p$ and all $u, v \in \mathbb{R}$ we have 
$|\Phi_{\textup{rand}}(u, v; p)|\leq 1.$ Hence, we get
\begin{equation}\label{RangePrimes}
|\Phi_{\textup{rand}}(u, v)| \leq \prod_{p\geq X} |\Phi_{\textup{rand}}(u, v; p)|,
\end{equation}
for any parameter $ X\geq 2$. Furthermore, observe that 
$$ \frac{(\log p)\X(p)}{p-\X(p)} = \frac{\log p}{p}\X(p) + O\Bigl(\frac{\log p}{p^2}\Bigr).
$$
This implies 
$$ \Phi_{\textup{rand}}(u, v; p)= \ex\left(\exp\left(iu \re \frac{(\log p)\X(p)}{p} + iv \im\frac{(\log p)\X(p)}{p} \right)\right) + O\left(\frac{(|u|+|v|)\log p}{p^2}\right).$$
Therefore, if $p> \max(|u|\log |u|,|v|\log |v|)$ then 
\begin{equation}\label{Phip}
\begin{aligned}
\Phi_{\textup{rand}}(u, v; p) & =  \ex\Bigl(1+ iu \re \frac{(\log p)\X(p)}{p} + iv \im\frac{(\log p)\X(p)}{p}
 \\
& \hskip2cm - \frac12\Bigl(u \re \frac{(\log p)\X(p)}{p} + v \im\frac{(\log p)\X(p)}{p}\Bigr)^2 \Bigr)\\
& \quad + O\Bigl(\frac{(|u|+|v|)^3\log^3 p}{p^3}+ \frac{(|u|+|v|)\log p}{p^2}\Bigr)  \\
& = 1- (u^2 + v^2) \frac{\log^2p}{4p^2} + O\Bigl( \frac{(|u|+|v|)^3\log^3 p}{p^3}+ \frac{(|u|+|v|)\log p}{p^2}\Bigr),
\end{aligned}
\end{equation}
since $\ex(\X(p))=0$, $\ex\big(\re\X(p)\im\X(p)\big)=0$, and $\ex\big((\im\X(p)^2\big)= \ex\big((\re\X(p)^2\big)=1/2$.  
We now choose $X= A\max(|u|\log |u|,|v|\log |v|)$ for a suitably large constant $A>0$. Then inserting this estimate in \eqref{RangePrimes}, we obtain 
\begin{align*}
 |\Phi_{\textup{rand}}(u, v)| & \leq \exp\Bigl(- (u^2 + v^2) \sum_{p>X}\frac{\log^2p}{4p^2}\\
 & \hskip2cm + O\Bigl((|u|+|v|)^3 \sum_{p>X}\frac{\log^3 p}{p^3}+ (|u|+|v|)\sum_{p>X}\frac{\log p}{p^2}\Bigr)\Bigr)\\
 & \ll \exp\big(-c_0 (|u|+|v|)\big).
 \end{align*}
 where $c_0>0$ is a constant that depends on $A$. This completes the proof. 
\end{proof}
Since $\Phi_{\textup{rand}}(u, v)$ is exponentially decreasing by Proposition \ref{CharDecTrunc}, it follows from the Fourier inversion formula that the distribution of $\ld(1, \X)$ is absolutely continuous and has a smooth density function defined by 
$$g(x, y):= \frac{1}{(2\pi)^2}\int_{-\infty}^{\infty} \int_{-\infty}^{\infty} e^{-i(ux+vy)}\Phi_{\textup{rand}}(u, v) du dv .$$
To deduce Theorem \ref{Main} from Theorem \ref{Discrepancy}, we need to show that $g(0, 0)>0$. This follows from the following result of Borchsenius and Jessen \cite{BorJe}.
\begin{thm}[Borchsenius and Jessen \cite{BorJe}]\label{BorJeThm}
Let $\Y(n)$ be a sequence of independent random variables uniformly distributed on the unit circle. Let $f(z)=\sum_{k=1}^{\infty}\ell_k z^k$ be an analytic function in a disc $|z|<\rho$, such that $\ell_1\ne 0$. Let $\{r_n\}_{n\geq 1}$ and $\{\lambda_n\}_{n\geq 1}$ be sequences of real numbers such that $0<r_n< \rho$ and
$$  \sum_{n=1}^{\infty} |\lambda_n| r_n^2<\infty, \text{ and } \sum_{n=1}^{\infty} \lambda_n^2 r_n^2<\infty.
$$
Then the sum of random variables 
$$ \Y= \sum_{n=1}^{\infty} \lambda_nf(r_n\Y(n)),$$
is almost surely convergent and has a absolutely continuous distribution with a smooth density $h(x, y)$. Moreover, if $\sum_{n=1}^{\infty} |\lambda_n|r_n$ diverges then 
$h(x,y)>0$ for all $(x, y) \in \mathbb{R}^2$. 
\end{thm}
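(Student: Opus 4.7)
The plan is to prove the three claims (almost sure convergence, existence of a smooth absolutely continuous density, and strict positivity of that density) in that order, using a Fourier-analytic approach centered on the characteristic function $\Phi(u,v) = \prod_{n \ge 1} \Phi_n(u,v)$, where $\Phi_n(u,v)$ is the joint characteristic function of the real and imaginary parts of $\lambda_n f(r_n \Y(n))$.

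For almost sure convergence I would apply Kolmogorov's three-series theorem. Since $f(0)=0$ and $\ex[\Y(n)^k] = 0$ for $k \ge 1$, we have $\ex[f(r_n \Y(n))] = 0$, and a Parseval-type computation gives $\ex[|f(r_n \Y(n))|^2] = \sum_{k \ge 1} |\ell_k|^2 r_n^{2k} \ll r_n^2$ uniformly in $n$ (using that $r_n$ stays bounded away from $\rho$, say on a subsequence, after truncating finitely many terms). Hence $\sum_n \textup{Var}(\lambda_n f(r_n \Y(n))) \ll \sum_n \lambda_n^2 r_n^2 < \infty$, and the other two conditions of Kolmogorov's criterion follow similarly, with the hypothesis $\sum |\lambda_n| r_n^2 < \infty$ controlling the first absolute moment beyond the natural scale.

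For the existence of a smooth density it suffices by Fourier inversion to show that $(1+|u|+|v|)^N \Phi(u,v) \in L^1(\mathbb{R}^2)$ for every $N \ge 0$. Writing $\Y(n) = e^{i\theta_n}$ with $\theta_n$ uniform on $[0,2\pi]$ and expanding $f(r_n e^{i\theta}) = \ell_1 r_n e^{i\theta} + O(r_n^2)$, each $\Phi_n(u,v)$ is, up to small corrections, the characteristic function of a uniform distribution on a circle of radius $\asymp |\ell_1| |\lambda_n| r_n$, namely a Bessel factor $J_0(|\ell_1| |\lambda_n| r_n \sqrt{u^2+v^2})$, decaying like $(|\lambda_n| r_n \sqrt{u^2+v^2})^{-1/2}$. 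I would then combine two complementary estimates: for indices with $|\lambda_n| r_n \sqrt{u^2+v^2}$ moderate, a Taylor expansion in the spirit of \eqref{Phip} yields $|\Phi_n(u,v)| \le \exp(-c (u^2+v^2) \lambda_n^2 r_n^2)$ (using $\ell_1 \ne 0$ so that the quadratic term is genuinely positive); for indices where this quantity is large, the Bessel asymptotics give $|\Phi_n(u,v)| \le C(|\lambda_n| r_n \sqrt{u^2+v^2})^{-1/2}$. Multiplying over sufficiently many indices yields super-polynomial decay in $\sqrt{u^2+v^2}$, whence $h$ exists and is $C^\infty$ by differentiation under the integral.

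For strict positivity of $h$ under $\sum |\lambda_n| r_n = \infty$, I would show that the topological support of $\Y$ is all of $\mathbb{R}^2$ and then invoke smoothness to promote support to positivity. Each summand $\lambda_n f(r_n \Y(n))$ has support on a closed curve through the origin whose diameter is comparable to $|\ell_1| |\lambda_n| r_n$; since these diameters sum to infinity, the partial sums $\sum_{n \le N} \lambda_n f(r_n \Y(n))$ can approximate any prescribed point in $\mathbb{R}^2$ (a Steinhaus-type convexity-plus-rotation argument on products of circles). A Kolmogorov zero-one law applied to the tail, combined with the smooth density provided by the previous step, then spreads positive mass to every neighborhood of every point, yielding $h(x,y) > 0$ throughout $\mathbb{R}^2$. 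The main obstacle, and the technical heart of the proof, will be the uniform super-polynomial decay of $\Phi(u,v)$: the two-regime estimate sketched above must be arranged so that enough indices contribute genuine decay in whichever direction of $(u,v)$ is large, which requires careful matching of the scale of $|\lambda_n| r_n$ to $\sqrt{u^2+v^2}$ and uses both summability hypotheses in a delicate balance.
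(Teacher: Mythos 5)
The paper does not give a proof of this theorem. It is cited as Theorems 5 and 7 of Borchsenius and Jessen \cite{BorJe}, together with a remark that their original statement treats $\lambda_n\equiv 1$ and ``extends easily'' to general $\lambda_n$. So there is no paper proof to match against; I can only assess your sketch on its own merits. Your overall Fourier-analytic strategy (characteristic function $\Phi=\prod_n\Phi_n$, super-polynomial decay, then inversion) is broadly the route Borchsenius and Jessen took for value-distribution questions, and your treatment of almost-sure convergence via Kolmogorov's three-series theorem and of the decay of $\Phi$ is plausible in outline, though you should note that nothing in the hypotheses forces $r_n$ to stay bounded away from $\rho$, so the ``uniform in $n$'' bound $\ex|f(r_n\Y(n))|^2\ll r_n^2$ requires more care than ``truncating finitely many terms.''

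The genuine gap is in the positivity step. Knowing that the topological support of $\Y$ is all of $\mathbb{R}^2$ does \emph{not} imply $h>0$ everywhere: a continuous density can vanish on a nowhere dense set (a point, a curve) while still having full support, since the support equals the closure of $\{h>0\}$. Moreover, Kolmogorov's zero-one law concerns tail events and gives no leverage on pointwise positivity of a density, so invoking it to ``spread positive mass'' does not close the gap. The argument that actually works exploits the convolution structure instead. Write $\Y=A_N+B_N$ with $A_N=\sum_{n\le N}\lambda_n f(r_n\Y(n))$ and $B_N$ the independent tail. The same characteristic-function decay applied to the tail shows $B_N$ has a continuous density $h_{B_N}$; being a probability density, it is positive at some point $b_N$, and since $\ex|B_N|^2\to 0$, one may take $b_N\to 0$ as $N\to\infty$. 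On the other hand, the divergence of $\sum|\lambda_n|r_n$ forces the support of $A_N$ to eventually contain any prescribed ball (your Steinhaus-type argument on circle products establishes exactly this). Now fix $(x,y)$ and choose $N$ large enough that the support of $A_N$ meets the open set $\{a:h_{B_N}((x,y)-a)>0\}$, which is nonempty and contains a neighborhood of $(x,y)-b_N$. Then
\[
h(x,y)=\int h_{B_N}\bigl((x,y)-a\bigr)\,d\mu_{A_N}(a)>0 .
\]
It is this matching of the head's growing support against the tail's continuous density, not a zero-one law, that yields the strict positivity of $h$ under the divergence hypothesis.
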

\begin{rem} Borchsenius and Jessen \cite{BorJe} only proved this result for the sum of random variables $\sum_{n=1}^{\infty} f(r_n\Y(n))$ (see Theorems 5 and 7 of \cite{BorJe}), but their proof extends easily to the more general case $\sum_{n=1}^{\infty} \lambda_nf(r_n\Y(n)).$
\end{rem}
\begin{cor} \label{BorJeCor} We have $g(x, y)>0$ for all $(x, y)\in \mathbb{R}^2$.
\end{cor}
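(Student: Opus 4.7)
The plan is to deduce the corollary as a direct application of Theorem \ref{BorJeThm} to the random sum defining $\ld(1,\X)$. First I would rewrite each prime contribution in the geometric form
\[
\frac{(\log p)\X(p)}{p-\X(p)} = (\log p)\, f\!\left(\frac{\X(p)}{p}\right),
\qquad \text{where } f(z)=\frac{z}{1-z}=\sum_{k=1}^{\infty} z^k,
\]
so that $f$ is analytic in the disc $|z|<1$ with leading coefficient $\ell_1=1\neq 0$, giving radius $\rho=1$. After enumerating the primes $p_1<p_2<\cdots$, I would take $\Y(n):=\X(p_n)$, $\lambda_n:=\log p_n$ and $r_n:=1/p_n$, which plainly satisfies $0<r_n<\rho$.

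The next step is to verify the three numerical hypotheses of Theorem \ref{BorJeThm}. Mertens-type estimates yield
\[
\sum_{n} |\lambda_n| r_n^{2} = \sum_{p}\frac{\log p}{p^{2}}<\infty,\qquad \sum_{n}\lambda_n^{2} r_n^{2}=\sum_{p}\frac{\log^{2}p}{p^{2}}<\infty,
\]
while the positivity criterion requires the divergence
\[
\sum_{n}|\lambda_n| r_n =\sum_{p}\frac{\log p}{p}=+\infty,
\]
which is Mertens' first theorem. The $\{\Y(n)\}$ are independent and uniformly distributed on the unit circle by construction, matching the hypothesis on $\Y(n)$ in Theorem \ref{BorJeThm}.

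Having checked all the hypotheses, Theorem \ref{BorJeThm} asserts that $\sum_n \lambda_n f(r_n\Y(n))=\ld(1,\X)$ converges almost surely (which we already know) and its distribution admits a smooth density $h(x,y)$ satisfying $h(x,y)>0$ for every $(x,y)\in\mathbb{R}^{2}$. Since the distribution of $\ld(1,\X)$ has a unique density (shown to be $g$ via the Fourier inversion applied after Proposition \ref{CharDecTrunc}), we must have $h=g$, and hence $g(x,y)>0$ for every $(x,y)\in\mathbb{R}^{2}$, as claimed.

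There is essentially no obstacle here beyond a bookkeeping check; the only substantive point is the divergence of $\sum_p (\log p)/p$, which is what forces the density to be strictly positive rather than merely nonnegative. The rest is an immediate translation of the random model into the exact form required by Borchsenius--Jessen.
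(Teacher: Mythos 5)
Your proof is correct and follows essentially the same route as the paper: both decompositions take $\lambda_n=\log p_n$, $r_n=1/p_n$, $f(z)=z/(1-z)$ and then verify the Borchsenius--Jessen hypotheses via the convergence of $\sum_p(\log p)/p^2$ and $\sum_p(\log^2 p)/p^2$ together with the divergence of $\sum_p(\log p)/p$. The only additional detail you spell out, harmlessly, is the identification $h=g$ by uniqueness of the density.
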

\begin{proof} By \eqref{SumRandomInd} we have 
$$\ld(1, \X)= \sum_{p} (\log p) f\Bigl(\frac{\X(p)}{p}\Bigr), $$
where 
$$f(z)= \frac{z}{1-z}= \sum_{n=1}^{\infty} z^n,$$
is analytic in $|z|<1$. We can then verify that all the conditions of Theorem \ref{BorJeThm} are verified, since $\sum_p (\log p)/p^2$ and $\sum_p (\log p)^2/p^2$ converge, and $\sum_p (\log p)/p$ diverges. This completes the proof. 
\end{proof}

\begin{proof}[Deducing Theorem \ref{Main} from Theorem \ref{Discrepancy}] 
We recall that $q$ is a prime number.
 Let $\varepsilon=\varepsilon(q)>0$ be a small parameter to be chosen, such  that $\varepsilon(q) \to 0$ as $q\to \infty$. Let $\Psi_q(\varepsilon)$ denotes the number of non-principal characters $\chi\neq \chi_0 \bmod q $ such that
$$  
\Bigl|\frac{L^\prime}{L}(1, \chi)\Bigr| \leq \varepsilon.
$$
By Theorem \ref{Discrepancy} we have
\begin{equation}\label{LBSmallValues} 
\begin{aligned}
\frac{\Psi_q(\varepsilon)}{q-1} &\geq  \frac{1}{q-1}\Bigl|\Bigl\{\chi\neq \chi_0 \bmod q : \frac{L^\prime}{L}(1, \chi) \in 
\Bigl( -\frac{\varepsilon}{2}, \frac{\varepsilon}{2} \Bigr)^2 \Bigr\}\Bigr| \\
&= \pr\Bigl(\Lr\in \Bigl( -\frac{\varepsilon}{2}, \frac{\varepsilon}{2} \Bigr)^2 \Bigr) + O\Bigl(\frac{(\log\log q)^2}{\log q}\Bigr).
\end{aligned}
\end{equation}
On the other hand if $\varepsilon$ is suitably small then we have 
$$\pr\Bigl(\Lr\in \Bigl( -\frac{\varepsilon}{2}, \frac{\varepsilon}{2} \Bigr)^2 \Bigr)= \int_{-\varepsilon/2}^{\varepsilon/2} \int_{-\varepsilon/2}^{\varepsilon/2} g(x, y) dx dy \gg \varepsilon^2,$$
 since $g$ is continuous on $\mathbb{R}^2$ and $g(0, 0)>0$ by Corollary \ref{BorJeCor}. Hence, choosing $\varepsilon= C \log\log q/\sqrt{\log q}$ for some suitably large constant $C$ we deduce that 
$$ 
\Psi_q(\varepsilon) \gg  \frac{q(\log\log q)^2}{\log q},
$$
which implies the result.
\end{proof}

We end this section by proving the following the proposition, which gives uniform bounds for the moments of $|\ld(1, \X)|$. This will be used in the proof of Theorem \ref{Discrepancy}.
\begin{pro}\label{BoundMomRand} There exists a constant $c>0$ such that for all positive integers $k\geq 8$ we have 
$$ \ex\left(\left|\ld(1, \X)\right|^{2k}\right)\leq \big(c\log k\big)^{2k}.$$

\end{pro}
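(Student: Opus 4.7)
The plan is to decompose $\Lr = S + W$ with $S$ supported on primes $\le k$ and $W$ on primes $>k$, bound $S$ deterministically by triangle inequality, control the tail of $W$ via a Bernstein-type concentration inequality, and combine. Setting $Z_p := (\log p)\X(p)/(p-\X(p))$, the $Z_p$ are independent and mean-zero with $|Z_p|\le (\log p)/(p-1)$, and by \eqref{SumRandomInd} we have $\Lr = \sum_p Z_p$. Put
$$ S := \sum_{p \le k} Z_p, \qquad W := \sum_{p > k} Z_p. $$

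First, Mertens' theorem gives $|S| \le \sum_{p \le k} (\log p)/(p-1) \le C_1 \log k$, so $|S|^{2k} \le (C_1 \log k)^{2k}$ pointwise.

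Second, I would control $\ex|W|^{2k}$ via tail bounds. The computation $\ex|Z_p|^2 = (\log p)^2/(p^2-1)$ (a standard Poisson-kernel integral, already used in the proof of Proposition \ref{CharDecTrunc}) together with partial summation against the prime number theorem yields
$$ \sigma^2 := \sum_{p > k} \ex|Z_p|^2 \ll \frac{\log k}{k}, \qquad M := \sup_{p > k} \frac{\log p}{p-1} \ll \frac{\log k}{k}. $$
Applying the classical Bernstein inequality to $\re W$ and $\im W$ separately and taking a union bound produces, for some absolute $c>0$,
$$ \pr\bigl(|W| \ge t\bigr) \ll \exp\!\left(-\frac{c\, t\, k}{\log k}\right) \qquad (t \ge 1). $$
Writing $\ex|W|^{2k} = \int_0^{\infty} 2k\, t^{2k-1}\pr(|W| \ge t)\,dt$, splitting at $t = 1$, and invoking $\int_0^\infty t^{2k-1} e^{-\alpha t}dt = (2k-1)!/\alpha^{2k}$ with $\alpha = ck/\log k$, Stirling's formula gives
$$ \ex|W|^{2k} \ll 1 + (2k)!\,\Bigl(\frac{\log k}{c\,k}\Bigr)^{2k} \le (C_2 \log k)^{2k}. $$

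Finally, $|\Lr|^{2k} \le 2^{2k}\bigl(|S|^{2k} + |W|^{2k}\bigr)$ combines the two to give $\ex|\Lr|^{2k}\le (c\log k)^{2k}$ for some absolute $c>0$, as required (the constraint $k\ge 8$ absorbs the polynomial-in-$k$ prefactors from Stirling).

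The main delicate point is the balance in the truncation level: if one truncates at primes $\le Y$, the deterministic piece is $(\log Y)^{2k}$ while the Bernstein-driven tail moment is roughly $(k\log Y/Y)^{2k}$; these are simultaneously of order $(\log k)^{2k}$ precisely at $Y = k$, and any other choice inflates one of the two pieces. Verifying the Bernstein constants carefully (so that the sub-exponential rate genuinely has denominator $\log k$ and not a larger power of $\log k$) is the only subtle calculation.
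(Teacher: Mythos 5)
Your proof is correct, but it takes a genuinely different route from the paper. The paper applies Minkowski's inequality to split $\Lr = -\sum_n \Lambda(n)\X(n)/n$ at a cutoff $y$, bounds the short Dirichlet sum deterministically by $\sum_{n\le y}\Lambda(n)/n \ll \log y$, and then computes the $2k$-th moment of the tail \emph{exactly} using the orthogonality relation $\ex\bigl(\X(n)\overline{\X(m)}\bigr)=\delta_{nm}$: the tail moment becomes $\sum_{n>y^k}\Lambda_{k,y}(n)^2/n^2$, which after the crude bound $\Lambda_{k,y}(n)\le(\log n)^k$ and a monotonicity argument is $\ll (k\log y)^{2k}/y^{k}$. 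The two contributions balance at $y=k^2$, giving $O(\log k)$ after the $2k$-th root. You instead decompose at the level of primes, bound the head by Mertens, and control the tail via Bernstein's concentration inequality, using the second-moment computation $\ex|Z_p|^2=(\log p)^2/(p^2-1)$, the sup-norm $|Z_p|\le(\log p)/(p-1)$, and a union bound over real and imaginary parts; integrating the tail then recovers the $2k$-th moment by Stirling. Your Bernstein-based tail estimate is sharper than the paper's $\ell^2$-type bound (which sacrifices the full $\sqrt{y}$-saving), which is exactly why your cutoff sits at $Y=k$ rather than $y=k^2$. Both proofs are valid and produce absolute constants; the paper's is arguably more elementary and self-contained (orthogonality plus a monotonicity trick, no external concentration machinery), while yours is more robust and makes the sub-exponential decay of the tail, and hence the shape $(c\log k)^{2k}$, transparent.
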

\begin{proof} Let $y>2$ be a real number to be chosen. By Minkowski's inequality and a weak form of the Prime Number Theorem we have 
\begin{equation}\label{Minkowski}
\begin{aligned}
\ex\left(\left|\ld(1, \X)\right|^{2k}\right)^{1/(2k)}
&\leq \ex\Bigl(\Bigl|\sum_{n\leq y}\frac{\Lambda(n)\X(n)}{n}\Bigr|^{2k}\Bigr)^{1/(2k)} +\ex\Bigl(\Bigl|\sum_{n>y}\frac{\Lambda(n)\X(n)}{n}\Bigr|^{2k}\Bigr)^{1/(2k)}\\
&\leq \sum_{n\leq y}\frac{\Lambda(n)}{n}+\ex\Bigl(\Bigl|\sum_{n>y}\frac{\Lambda(n)\X(n)}{n}\Bigr|^{2k}\Bigr)^{1/(2k)}
\\&
\ll \log y+\ex\Bigl(\Bigl|\sum_{n>y}\frac{\Lambda(n)\X(n)}{n}\Bigr|^{2k}\Bigr)^{1/(2k)}.\\
\end{aligned}
\end{equation}
Let $$\Lambda_{\ell,y}(n):=\sum_{\substack{n_1,n_2,\dots, n_{\ell}>y\\ n_1n_2\cdots n_{\ell}=n}} \Lambda(n_1)\Lambda(n_2)\cdots \Lambda(n_{\ell}).$$
Then, we have 
\begin{align*}
\ex\Bigl(\Big|\sum_{n>y}\frac{\Lambda(n)\X(n)}{n}\Bigr|^{2k}\Bigr)
&= \ex\Bigl(\sum_{n>y^k}\frac{\Lambda_{k,y}(n)\X(n)}{n}\sum_{n>y^k}\frac{\Lambda_{k,y}(m)\overline{\X(m)}}{m}\Bigr)\\
&=\sum_{n>y^{k}}\frac{\Lambda_{k,y}(n)^2}{n^2}\leq 
\sum_{n>y^{k}}\frac{(\log n)^{2k}}{n^2},
\end{align*}
since 
\begin{equation}\label{boundLambda}
\Lambda_{\ell,y}(n) \leq \Lambda_{\ell}(n) \leq \Bigl(\sum_{m|n}\Lambda(m)\Bigr)^{\ell}= (\log n)^{\ell}.
\end{equation}
Moreover, since $(\log n)^{2k}/\sqrt{n}$ is decreasing for $n>e^{4k}$, we deduce that if $y\geq e^4 $ then 
$$ \ex\Bigl(\Bigl|\sum_{n>y}\frac{\Lambda(n)X(n)}{n}\Bigr|^{2k}\Bigr)\leq \frac{(k\log y)^{2k}}{y^{k/2}}\sum_{n>y^{k}}\frac{1}{n^{3/2}}\ll\frac{(k\log y)^{2k}}{y^{k}}.$$
Choosing $y=k^2$ and inserting this estimate in \eqref{Minkowski} completes the proof.
\end{proof}


\section{Asymptotic formulas for the moments of $L^\prime/L(1, \chi)$ : Proof of Theorem \ref{MomentsLogDer}}\label{MomentsSec}

We first start with the following classical lemma, which provides a bound for $L^\prime/L(s,\chi)$ when $s$ is far from a zero of $L(z, \chi)$. 

\begin{lem}\label{BoundLD}
Let $\chi$ be a non-principal character modulo $q$. Let $t$ be a real number and suppose that $L(z,\chi)$ has no zeros for $\re(z)>\sigma_0$ and $|\im(z)|\leq |t|+1$. Then for any $\sigma>\sigma_0$ we have 
$$\frac{L^\prime}{L}(\sigma+it,\chi)\ll \frac{\log(q(|t|+2))}{\sigma-\sigma_0}.$$
\end{lem}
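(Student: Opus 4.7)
The plan is to invoke the classical partial-fraction representation of $L'/L(s,\chi)$ arising from the Hadamard product of $L(s,\chi)$. In the strip $-1\leq \sigma \leq 2$, one has the standard formula
\[
\frac{L^\prime}{L}(s,\chi) = \sum_{\substack{\rho\\ |\gamma-t|\leq 1}} \frac{1}{s-\rho} + O\bigl(\log(q(|t|+2))\bigr),
\]
where the sum runs over the non-trivial zeros $\rho = \beta + i\gamma$ of $L(z,\chi)$ with $|\gamma-t|\leq 1$ (see, e.g., Davenport's \emph{Multiplicative Number Theory}, Chapter 16). The Riemann--von Mangoldt zero-counting estimate for $L(s,\chi)$ bounds the number of such zeros by $\ll \log(q(|t|+2))$.

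Under the zero-free hypothesis, every zero appearing in the sum satisfies $\beta \leq \sigma_0$, hence for $\sigma>\sigma_0$ we have $|s-\rho|\geq \sigma-\sigma_0$. Therefore the sum over zeros is bounded by
\[
\frac{\#\{\rho:\ |\gamma-t|\leq 1\}}{\sigma-\sigma_0} \ll \frac{\log(q(|t|+2))}{\sigma-\sigma_0},
\]
and the $O(\log(q(|t|+2)))$ remainder is absorbed into this quantity when $\sigma-\sigma_0\leq 1$ and is comparable to it otherwise (since $\sigma \leq 2$ forces $\sigma-\sigma_0 = O(1)$). This gives the claimed estimate in the range $\sigma_0 < \sigma \leq 2$.

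For $\sigma > 2$ one argues directly from the absolutely convergent Dirichlet series
\[
\frac{L^\prime}{L}(s,\chi) = -\sum_{n\geq 1}\frac{\Lambda(n)\chi(n)}{n^s},
\]
which gives $|L^\prime/L(\sigma+it,\chi)|\leq -\zeta^\prime/\zeta(\sigma) = O(1)$, and in fact $\ll 2^{-\sigma}$ once $\sigma$ is taken sufficiently large. Since $\sigma-\sigma_0\leq \sigma$ and $2^{-\sigma}\leq 1/\sigma$, these bounds are easily dominated by $\log(q(|t|+2))/(\sigma-\sigma_0)$ throughout this range. The main (mild) obstacle is simply invoking the correct form of the partial-fraction expansion with error term $O(\log(q(|t|+2)))$; this is classical but relies on the Hadamard factorisation of $L(s,\chi)$ together with the zero-counting estimate, and no subtlety concerning a possible Siegel zero arises since the bound allows the exponent $\sigma_0$ to be arbitrarily close to $1$.
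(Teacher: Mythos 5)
Your proposal follows the same line as the paper: invoke the Hadamard/partial-fraction expansion of $L'/L(s,\chi)$ from Davenport, Chapter 16, bound the number of nearby zeros by $\ll\log(q(|t|+2))$, and use the zero-free hypothesis to bound each term in the sum by $1/(\sigma-\sigma_0)$. The only difference is that you explicitly dispose of the range $\sigma>2$ via the Dirichlet series, whereas the paper leaves that (easy) case implicit; otherwise the two proofs are essentially identical.
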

\begin{proof}
Let $\rho$ runs over the non-trivial zeros of $L(s,\chi)$. Then it follows from equation (4) of Chapter 16 of Davenport \cite{Da}
and a simple density theorem that 
\begin{align*}
\frac{L^\prime}{L}(\sigma+it,\chi)
&= \sum_{\substack{\rho \colon |t-\im(\rho)|<1}}\frac{1}{\sigma+it-\rho}+ O\big(\log(q(|t|+2))\big)\\
&\ll \frac{1}{\sigma-\sigma_0}\Bigl(\sum_{\substack{\rho\colon |t-\im(\rho)|<1}} 1\Bigr) + \log(q(|t|+2))\\
&\ll \frac{\log(q(|t|+2))}{\sigma-\sigma_0},
\end{align*}
as desired. 
\end{proof}
Using this lemma we can approximate large powers of $L^\prime/L(1,\chi)$ by short Dirichlet polynomials, if $L(s,\chi)$ has no zeros in a certain region to the left of the line $\re(s)=1$. 
\begin{pro}\label{ApproximationLarge} Let $0<\delta<1/2$ be fixed, and $q$ be large. Let $y\geq (\log q)^{10/\delta}$ be a  real number and $k\leq 2\log q/\log y$ be a positive integer. Then, for any non-principal character $\chi\bmod q$, if $L(s,\chi)$ is non-zero for $\re(s)>1-\delta$ and $|\im(s)|\leq y^{k\delta}$, then we have 
$$ \Bigl(\frac{L^\prime}{L}(1,\chi)\Bigr)^k=(-1)^k\sum_{n\leq y^k} \frac{\Lambda_k(n)}{n}\chi(n)+O_{\delta}\Big(y^{-k\delta/4}\Big),$$
where $\Lambda_k(n)$ is defined in \eqref{DefLambda}.
\end{pro}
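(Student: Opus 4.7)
The plan is to recover $(L'/L(1,\chi))^k$ from the Dirichlet series
\[
G(s,\chi):=\sum_{n=1}^{\infty}\frac{\Lambda_k(n)\chi(n)}{n^s}=(-1)^k\Bigl(\frac{L'}{L}(s,\chi)\Bigr)^k \qquad (\re(s)>1)
\]
by a truncated Perron-type formula and a contour shift. I would set $X:=y^k$, $T:=y^{k\delta}-1$, and $c:=1/\log X$, and start from
\[
\sum_{n\leq X}\frac{\Lambda_k(n)\chi(n)}{n}=\frac{1}{2\pi i}\int_{c-iT}^{c+iT}G(1+w,\chi)\,\frac{X^w}{w}\,dw+\mathcal{E}_{\mathrm{P}},
\]
where $\mathcal{E}_{\mathrm{P}}$ is the standard Perron truncation error. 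Its size is controlled using the trivial bound $\Lambda_k(n)\le(\log n)^k$ (cf.\ \eqref{boundLambda}) together with $|G(1+c,\chi)|\ll c^{-k}=(k\log y)^k$, which follows from $|L'/L(1+c,\chi)|\le -\zeta'/\zeta(1+c)\ll 1/c$. With $k\log y\le 2\log q$ and $T=y^{k\delta}$ one readily checks $\mathcal{E}_{\mathrm{P}}\ll y^{-k\delta/2}$.

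Next I would shift the contour from $\re(w)=c$ to $\re(w)=-3\delta/4$. By hypothesis $L(z,\chi)$ has no zeros in the rectangle $\re(z)\geq 1-\delta$, $|\im(z)|\leq T+1$, so $G(1+w,\chi)$ is holomorphic in the corresponding shifted rectangle except at $w=0$; the residue there equals $G(1,\chi)=(-1)^k(L'/L(1,\chi))^k$, producing the claimed main term after multiplying the final identity by $(-1)^k$.

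To estimate the remaining contour, I would invoke Lemma \ref{BoundLD} with $\sigma_0=1-\delta$: on the left vertical segment $\re(w)=-3\delta/4$, $|\im(w)|\leq T$, and on the horizontal pieces $\im(w)=\pm T$, $-3\delta/4\leq\re(w)\leq c$, one has $\sigma-\sigma_0\geq \delta/4$, hence
\[
\Bigl|\frac{L'}{L}(1+w,\chi)\Bigr|\ll_{\delta}\log\bigl(q(|\im w|+2)\bigr),
\]
and therefore $|G(1+w,\chi)|\ll_{\delta}(\log qT)^k$. The left vertical integral is then bounded by $\ll_{\delta}X^{-3\delta/4}(\log qT)^k\log T$, and the two horizontal integrals by $\ll_{\delta}(\log qT)^k X^c/T$.

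Finally, I would verify that every error term is $\ll y^{-k\delta/4}$. The key arithmetic uses both hypotheses of the proposition: $k\log y\leq 2\log q$ gives $\log(qT)\leq\log q+k\delta\log y\leq 3\log q$, while $y\geq(\log q)^{10/\delta}$ gives $\log y\geq(10/\delta)\log\log q$, which comfortably yields $(3\log q)^k\leq y^{k\delta/2}$. Combined with the saving $X^{-3\delta/4}=y^{-3k\delta/4}$ on the shifted vertical line (and with $X^c\ll 1$, $T=y^{k\delta}$ on the horizontal pieces), this produces the target bound $O_{\delta}(y^{-k\delta/4})$. The main obstacle here is bookkeeping rather than any conceptual difficulty: one has to balance the Perron truncation error, the vertical shift, and the horizontal integrals simultaneously, and the generous margin built into the assumption $y\ge(\log q)^{10/\delta}$ versus the modest growth $k\leq 2\log q/\log y$ is exactly what makes all three contributions fit under $y^{-k\delta/4}$.
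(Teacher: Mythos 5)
Your proof is correct and follows essentially the same route as the paper: a truncated Perron formula at $c=1/(k\log y)$, followed by a contour shift into the zero-free region and an application of Lemma~\ref{BoundLD}, with the combinatorial slack coming from $y\geq(\log q)^{10/\delta}$ versus $k\log y\leq 2\log q$. The only deviations are cosmetic: you shift to $\re(w)=-3\delta/4$ with $T\approx y^{k\delta}$, while the paper shifts to $-\delta/2$ with $T=y^{k\delta/2}$; both choices land comfortably inside the allowed rectangle and produce the same $O_\delta(y^{-k\delta/4})$ bound.

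One point worth tightening if you write this out in full: when you bound the Perron truncation error you invoke $\Lambda_k(n)\le(\log n)^k$ and $\sum_n\Lambda_k(n)n^{-1-c}\ll c^{-k}$, but the error term also contains a factor $1/|\log(y^k/n)|$ that blows up as $n\to y^k$. As in the paper, you should split the sum into the ranges $n\le y^k/2$, $y^k/2<n<2y^k$, $n\ge 2y^k$, handle the middle range with $\Lambda_k(n)\le(2k\log y)^k$ and $|\log(y^k/n)|\gg|n-y^k|/y^k$, and (or) first assume without loss of generality that $y^k\in\mathbb Z+1/2$ to avoid the boundary case $n=y^k$. With that adjustment, the bookkeeping you sketch goes through exactly as claimed.
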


\begin{proof}
Without loss of generality, suppose that $y^k\in \mathbb{Z}+1/2$. Let $c=1/(k\log y)$, and $T$ be a large real number to be chosen. Then by Perron's formula, we have 
$$ \frac{1}{2\pi i}\int_{c-iT}^{c+iT} \Bigl(\frac{L^\prime}{L}(1+s,\chi)\Bigr)^k \frac{y^{ks}}{s}ds= 
(-1)^k\sum_{n\leq y^k} \frac{\Lambda_k(n)}{n}\chi(n)+ O\Bigl(\frac{y^{kc}}{T}\sum_{n=1}^{\infty} \frac{\Lambda_k(n)}{n^{1+c}|\log(y^k/n)|}\Bigr).$$
To bound the error term of this last estimate, we split the sum into three parts: $n\leq y^k/2$, $y^k/2<n<2y^k$ and $n\geq 2y^k$. The terms in the first and third parts satisfy $|\log(y^k/n)|\geq \log 2$, and hence their contribution is 
$$\ll \frac{1}{T} \sum_{n=1}^{\infty}\frac{\Lambda_k(n)}{n^{1+c}}=\frac{1}{T} \Bigl(\sum_{n=1}^{\infty}\frac{\Lambda(n)}{n^{1+c}}\Bigr)^k\ll \frac{(2k\log y)^k}{T},$$
by the prime number theorem. To handle the contribution of the terms $y^k/2<n<2y^k$, we put $r=n-y^k$, and use that $|\log(y^k/n)|\gg |r|/y^k$. In this case, we have $\Lambda_k(n)\leq (\log n)^k\leq (2k\log y)^k$, and hence the contribution of these terms is 
$$\ll\frac{(2k\log y)^{k}}{Ty^{k}}\sum_{|r|\leq y^{k}}\frac{y^k}{|r|}\ll \frac{(2k\log y)^{k+1}}{T}.$$
We now choose $T=y^{k\delta/2}$ and move the contour to the line $\re(s)=-\delta/2$. By our assumption, we only encounter a simple pole at $s=0$ which leaves a residue $(L^\prime/L(1,\chi))^k$. Therefore, we deduce that 
$$ \frac{1}{2\pi i}\int_{c-iT}^{c+iT} \Bigl(\frac{L^\prime}{L}(1+s,\chi)\Bigr)^k \frac{y^{ks}}{s}ds= \Bigl(\frac{L^\prime}{L}(s,\chi)\Bigr)^k+ E_1,$$
where 
\begin{align*}
E_1&=\frac{1}{2\pi i} \left(\int_{c-iT}^{-\delta/2-iT}+ \int_{-\delta/2-iT}^{-\delta/2+iT}+ \int_{-\delta/2+iT}^{c+iT}\right) \Bigl(-\frac{L^\prime}{L}(1+s,\chi)\Bigr)^k \frac{y^{ks}}{s}ds\\
& \ll_{\delta} \frac{(\log (qT))^k}{T}+ y^{-k\delta/2}\Bigl(\frac{\log (qT)}{\delta}\Bigr)^{k+1}\\
&\ll_{\delta} y^{-k\delta/4},
\end{align*}
by Lemma \ref{BoundLD}. Finally, since $(2k\log y)^{k+1}/T\ll y^{-k\delta/4}$, the result follows.  
\end{proof}

Now, using a standard zero density estimate due to Montgomery (see equation \eqref{ZeroDensity} below), we deduce from Proposition \ref{ApproximationLarge} that large powers of $L^\prime/L(1,\chi)$ can be approximated by short Dirichlet polynomials for almost all non-principal characters $\chi \bmod q$.
\begin{cor}\label{AAApproximation}
Let $q$ be a large prime. Let $k$ be a positive integer such that $k\leq \log q/(50\log\log q)$.  For all except $O(q^{3/4})$ non-principal characters $\chi \bmod q$ we have 
$$ \Bigl(\frac{L^\prime}{L}(1,\chi)\Bigr)^k=(-1)^k\sum_{n\leq q} \frac{\Lambda_k(n)}{n}\chi(n)+O\Big(q^{-1/20}\Big).$$
\end{cor}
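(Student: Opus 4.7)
The plan is to apply Proposition \ref{ApproximationLarge} with $\delta = 1/5$ and $y = q^{1/k}$, and then to bound the number of ``bad'' characters (those for which the zero-free hypothesis of the proposition fails) by means of a zero density estimate.

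With this choice of parameters, the hypothesis $k\leq \log q/(50\log\log q)$ gives $y = q^{1/k}\geq \exp(50\log\log q)=(\log q)^{50}=(\log q)^{10/\delta}$, so the lower bound on $y$ in Proposition \ref{ApproximationLarge} is met. We also have $y^k = q$ and $y^{k\delta}= q^{1/5}$, and the condition $k\leq 2\log q/\log y = 2k$ is automatic. The truncation at $y^k$ thus becomes the desired truncation at $n\leq q$, and the error term $y^{-k\delta/4}$ equals $q^{-1/20}$. Hence, for every non-principal character $\chi\bmod q$ such that $L(s,\chi)$ has no zero in the rectangle $\re(s)>4/5$, $|\im(s)|\leq q^{1/5}$, Proposition \ref{ApproximationLarge} directly yields
\[ \Bigl(\frac{L^\prime}{L}(1,\chi)\Bigr)^k=(-1)^k\sum_{n\leq q} \frac{\Lambda_k(n)}{n}\chi(n)+O\bigl(q^{-1/20}\bigr), \]
which is the asserted formula.

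It remains to show that at most $O(q^{3/4})$ non-principal characters modulo $q$ fail this zero-free hypothesis. The number of such characters is majorised by $\sum_{\chi\bmod q} N(4/5,q^{1/5},\chi)$, where $N(\sigma,T,\chi)$ denotes the number of non-trivial zeros of $L(s,\chi)$ with $\re(s)>\sigma$ and $|\im(s)|\leq T$. By Montgomery's zero density estimate (the formula \eqref{ZeroDensity} referenced above), this sum is $\ll (q\cdot q^{1/5})^{3(1-4/5)/(2-4/5)}(\log q)^{C}=q^{3/5}(\log q)^{C}$, which is comfortably $O(q^{3/4})$ for $q$ large.

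There is no serious obstacle beyond lining up the parameters: $y=q^{1/k}$ is forced if we want the Dirichlet polynomial to be truncated at $n\leq q$, and the restriction $k\leq \log q/(50\log\log q)$ is tailored so that this $y$ still satisfies $y\geq (\log q)^{10/\delta}$ with $\delta=1/5$. The gap between the density estimate ($q^{3/5+o(1)}$) and the stated bound ($q^{3/4}$) is deliberate slack, which will later be convenient when summing over the exceptional set.
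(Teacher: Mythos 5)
Your proof is correct and matches the paper's argument exactly: same choice $\delta=1/5$, $y=q^{1/k}$, same application of Montgomery's zero density estimate \eqref{ZeroDensity} to bound the exceptional set, and the same parameter arithmetic yielding the truncation $n\leq q$ and error $q^{-1/20}$. The explicit verification that $y\geq (\log q)^{10/\delta}$ and $k\leq 2\log q/\log y$ are satisfied (the latter trivially) is left implicit in the paper, so your write-up is if anything slightly more complete.
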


\begin{proof} Let $N(\sigma, T, \chi)$ denote the number of zeros of $L(s, \chi)$ in the rectangle $ \sigma<\re(s)\leq 1$ and $|\im(s)|\leq T$. The standard zero density result of Montgomery \cite{Mo} states that for $q,T\geq 2$ and $1/2\leq \sigma\leq 4/5$ we have 
\begin{equation}\label{ZeroDensity}
 \sum_{\chi \bmod q} N(\sigma, T, \chi)\ll (qT)^{3(1-\sigma)/(2-\sigma)} (\log(qT))^9.
\end{equation}
Choosing $\delta=1/5$, we deduce that for all except $O(q^{3/4})$ non-principal characters $\chi \bmod q$, $L(s, \chi)$ does not vanish in the region $\re(s)>1-\delta$ and $|\im(s)|\leq q^{\delta}$. We now take $y=q^{1/k}$  in Proposition \ref{ApproximationLarge}, to obtain that for all except $O(q^{3/4})$ non-principal characters $\chi \bmod q$ we have 
$$ \Bigl(\frac{L^\prime}{L}(1,\chi)\Bigr)^k=(-1)^k\sum_{n\leq q} \frac{\Lambda_k(n)}{n}\chi(n)+O\Big(q^{-1/20}\Big),$$
as desired.
\end{proof}
Another consequence of Proposition \ref{ApproximationLarge} is that $L^\prime/L(1, \chi)\ll \log\log q$ for all except for a small exceptional set of  non-principal characters $\chi \bmod q $.

\begin{cor}\label{ASBound}
Let $q$ be a large prime. Then for all but $O(q^{3/4})$ non-principal characters $\chi \bmod q$ we have
$$ \frac{L^\prime}{L}(1, \chi)\ll \log\log q.$$
\end{cor}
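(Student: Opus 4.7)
The plan is to apply Proposition \ref{ApproximationLarge} with $k=1$, choosing $y$ as small as the proposition allows, and then to use Montgomery's zero density estimate \eqref{ZeroDensity} to bound the set of characters for which the proposition fails to apply. Since the hypothesis forces $y \ge (\log q)^{10/\delta}$, the minimal admissible choice yields $\log y \asymp \log\log q$, which is exactly the target size: by a trivial Chebyshev-type estimate the truncated sum $\sum_{n\le y}\Lambda(n)/n$ is $O(\log y)$, so the bound $|\chi(n)|\le 1$ will immediately yield $L^\prime/L(1,\chi) \ll \log\log q$ for every non-exceptional $\chi$.

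Concretely, I would fix $\delta=1/5$ and take $y=(\log q)^{50}$, so that $y^{k\delta}=(\log q)^{10}$ and the error term in Proposition \ref{ApproximationLarge} becomes $O((\log q)^{-5/2})=o(1)$. It then remains to check that all but $O(q^{3/4})$ non-principal characters $\chi\bmod q$ satisfy the non-vanishing hypothesis, namely that $L(s,\chi)$ has no zero in the rectangle $\re(s)>4/5$, $|\im(s)|\le(\log q)^{10}$. For this I would apply \eqref{ZeroDensity} with $\sigma=4/5$ (so that the exponent $3(1-\sigma)/(2-\sigma)$ equals $1/2$) and $T=(\log q)^{10}$, obtaining an exceptional-set bound of order
\[
\sum_{\chi \bmod q} N(4/5, T, \chi) \ll \bigl(q(\log q)^{10}\bigr)^{1/2}(\log q)^9 \ll q^{3/4},
\]
exactly as required.

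I do not anticipate any genuine obstacle: the argument is a direct analogue of the proof of Corollary \ref{AAApproximation}, specialised to $k=1$ with a much smaller choice of $y$. The only point worth verifying carefully is that the lower bound $y\ge(\log q)^{10/\delta}$ imposed by Proposition \ref{ApproximationLarge} is indeed compatible with the $\log\log q$ target size, and the choice of parameters above makes this transparent. If anything counts as the ``hard'' step, it is the zero density input \eqref{ZeroDensity} itself, but that has already been recorded above and used in the same way in Corollary \ref{AAApproximation}.
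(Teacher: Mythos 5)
Your proposal is correct and follows essentially the same route as the paper: the paper's proof also takes $\delta=1/5$, $k=1$, $y=(\log q)^{50}$ in Proposition \ref{ApproximationLarge}, invokes Montgomery's density estimate \eqref{ZeroDensity} as in Corollary \ref{AAApproximation} to control the exceptional set, and then bounds the truncated Dirichlet polynomial trivially by $\sum_{n\le y}\Lambda(n)/n \ll \log y \ll \log\log q$. Your parameter checks ($y^{k\delta}=(\log q)^{10}$, the exponent $3(1-\sigma)/(2-\sigma)=1/2$ at $\sigma=4/5$, the error term $y^{-1/20}=(\log q)^{-5/2}$) are all accurate.
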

\begin{proof}
Taking $\delta=1/5$, $k=1$ and $y=(\log q)^{50}$ in Proposition \ref{ApproximationLarge} and using \eqref{ZeroDensity} as in the proof of Corollary \ref{AAApproximation} we deduce that for all except $O(q^{3/4})$ non-principal characters $\chi \bmod q$, we have 
\[
\frac{L^\prime}{L}(1,\chi)=-\sum_{n\leq y} \frac{\Lambda(n)}{n}\chi(n)+O\Big(y^{-1/20}\Big)\ll \log\log q.
\tag*{\qed}
\]
\renewcommand{\qed}{}
\end{proof}

We now have all the ingredients  to establish asymptotic formulas for large moments of $L^\prime/L(1, \chi)$, over the characters 
$\chi\in \F_q$, where $\F_q$ is defined in \eqref{Fq-def}.

\begin{proof}[Proof of Theorem \ref{MomentsLogDer}]
First, note that for any positive integer $r\geq 1$  by the Prime Number Theorem we have 
\begin{equation}\label{BoundPowerLambda}
\sum_{n\leq q}\frac{\Lambda_r(n)}{n} \leq  \Bigl(\sum_{n\leq q}\frac{\Lambda(n)}{n}\Bigr)^r \ll (2\log q)^r.
\end{equation}
Let $\E_q$ be the exceptional set in Corollary \ref{AAApproximation}. Then it follows from this result that
\begin{equation}\label{ApproxShortMoments}
\begin{aligned}
& \frac{1}{q-1}\sum_{\chi\in \F_q\setminus \E_q} \Bigl(\frac{L^\prime}{L}(1,\chi)\Bigr)^k\Bigl(\overline{\frac{L^\prime}{L}(1,\chi)}\Bigr)^{\ell}\\
& = \frac{(-1)^{k+\ell}}{q-1}\sum_{\chi \in \F_q\setminus \E_q} \sum_{n\leq q} \frac{\Lambda_k(n)}{n}\chi(n)\sum_{m\leq q} \frac{\Lambda_{\ell}(m)}{m}\overline{\chi(m)}+E_2,
\end{aligned}
\end{equation}
where 
$$ E_2 \ll q^{-1/20} (2\log q)^{\max(k,\ell)} \ll q^{-1/30}, $$
by \eqref{BoundPowerLambda}.  Now, by \eqref{BoundPowerLambda} and the orthogonality of Dirichlet characters the main term on the right hand side of \eqref{ApproxShortMoments} equals 
\begin{align*}
&(-1)^{k+\ell}\sum_{n\leq q} \frac{\Lambda_k(n)}{n}\sum_{m\leq q} \frac{\Lambda_{\ell}(m)}{m}\frac{1}{q-1}\sum_{\chi \in \F_q\setminus \E_q} \chi(n)\overline{\chi(m)}\\
&= (-1)^{k+\ell}\sum_{n, m\leq q} \frac{\Lambda_k(n)\Lambda_{\ell}(m)}{nm} \frac{1}{q-1} 
\sum_{\chi \bmod q} \chi(n)\overline{\chi(m)} +
 O\left(\frac{(2\log q)^{k+\ell}}{q^{1/4}}\right)\\
 & = (-1)^{k+\ell} \sum_{n\leq q} \frac{\Lambda_k(n)\Lambda_{\ell}(n)}{n^2} + O\left(q^{-1/8}\right),
 \end{align*}
 since $|\E_q|\ll q^{3/4}$.
Finally using \eqref{boundLambda}, together with the fact that the function $(\log t)^k/\sqrt{t}$ is decreasing for $t\geq e^{2k}$, we obtain
$$ \sum_{n> q}
\frac{\Lambda_{k}(n)\Lambda_{\ell}(n)}{n^2}\leq \sum_{n>q}
\frac{(\log n)^{k+\ell}}{n^2} \ll \frac{(\log q)^{k+\ell}}{\sqrt{q}}\sum_{n>q}\frac{1}{m^{3/2}}\ll \frac{(\log q)^{k+\ell}}{q}\ll q^{-1/2}.$$
Inserting these estimates in \eqref{ApproxShortMoments} gives 

\begin{equation}\label{AsympMoments1}
\frac{1}{q-1}\sum_{\chi\in \F_q\setminus \E_q} \Bigl(\frac{L^\prime}{L}(1,\chi)\Bigr)^k\Bigl(\overline{\frac{L^\prime}{L}(1,\chi)}\Bigr)^{\ell}= (-1)^{k+\ell}\sum_{n=1}^{\infty} \frac{\Lambda_k(n) \Lambda_{\ell}(n)}{n^2}+ O\left(q^{-1/30}\right). 
\end{equation}
Furthermore, it follows from Lemma \ref{BoundLD} along with the classical zero-free region for $L(s, \chi)$ that for $\chi \in \F_q$ we have 
\begin{equation}\label{ClassicalBound}
\frac{L^\prime}{L}(1,\chi)\ll (\log q)^2.
\end{equation}
Therefore, combining this bound with \eqref{AsympMoments1} yields
\begin{align*}
&\frac{1}{q-1}\sum_{\chi\in \F_q} \Bigl(\frac{L^\prime}{L}(1,\chi)\Bigr)^k\Bigl(\overline{\frac{L^\prime}{L}(1,\chi)}\Bigr)^{\ell}\\
 & =\frac{1}{q-1}\sum_{\chi\in \F_q\setminus \E_q} \Bigl(\frac{L^\prime}{L}(1,\chi)\Bigr)^k\Bigl(\overline{\frac{L^\prime}{L}(1,\chi)}\Bigr)^{\ell} +O\left(q^{-1/4}(\log q)^{2k+2\ell}\right)\\
& =(-1)^{k+\ell}\sum_{n=1}^{\infty} \frac{\Lambda_k(n) \Lambda_{\ell}(n)}{n^2}+ O\left(q^{-1/30}\right). 
\tag*{\qed}
\end{align*}
\renewcommand{\qed}{}
\end{proof}


\section{Bounding the discrepancy of the distribution of  $L^\prime/L(1, \chi)$: Proof of Theorem \ref{Discrepancy}} \label{DiscrepancySec}
Theorem \ref{Discrepancy} is proved along the same lines of Theorem 1.1 of \cite{LLR}, which bounds the discrepancy of the distribution of the logarithm of the Riemann zeta function to the right of the critical line. The main ingredient of the proof is the following result, which shows that the characteristic function of the joint distribution of   $\re (L^\prime/L(1, \chi))$ and $\im (L^\prime/L(1, \chi))$ is very close to that of the random variables $\re (\Lr)$ and $\im(\Lr)$. 
For $u,v\in \mathbb{R}$ we define
$$ 
\Phi_{q}(u,v):= \frac{1}{q-1} \sum_{\chi\neq \chi_0 \bmod q}
\exp\Bigl(i u \re\frac{L^\prime}{L}(1, \chi)+iv \im\frac{L^\prime}{L}(1, \chi)\Bigr).
$$
Then we prove
\begin{thm}\label{Characteristic}
Let $q$ be a large prime. There exists an absolute constant $b_0>0$
such that for $|u|,|v| \leq b_0(\log q)/(\log \log q)^2$ we have
$$
\Phi_{q}(u,v)=\Phi_{\textup{rand}}(u,v)+O\Bigl(\exp\Bigl(-\frac{\log q}{100\log\log q}\Bigr)\Bigr),
$$
where $\Phi_{\textup{rand}}(u, v)$ is defined \eqref{DefPhiRand}.
\end{thm}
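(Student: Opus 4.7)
The strategy follows \cite{LLR}: Taylor-expand both characteristic functions in the basis $z^k\bar{z}^{\ell}$, truncate at a level $K$ where Theorem \ref{MomentsLogDer} still applies, match moments, and separately bound the two tails. Setting $s:=u-iv$, so that $\re(sz)=u\,\re(z)+v\,\im(z)$, the identity
\[
\exp\bigl(i\,\re(sz)\bigr) = \sum_{k,\ell\ge 0}\frac{(i/2)^{k+\ell}\,s^k\bar{s}^{\,\ell}}{k!\,\ell!}\,z^k\bar{z}^{\,\ell}
\]
will be applied with $z=(L^\prime/L)(1,\chi)$ and with $z=\ld(1,\X)$. I will take the truncation parameter to be $K:=\lfloor\log q/(50\log\log q)\rfloor$, which is exactly the range in which Theorem \ref{MomentsLogDer} is available. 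The possible exceptional character contributes at most $1/q$ to $\Phi_{q}(u,v)$ and may be dropped at no cost, reducing the outer sum to $\F_q$.

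For the truncated (matched) parts, Theorem \ref{MomentsLogDer} replaces each character moment by its random counterpart with error $O(q^{-1/30})$; summing the absolute values of the coefficients gives
\[
q^{-1/30}\sum_{k+\ell\le K}\frac{|s|^{k+\ell}}{2^{k+\ell}\,k!\,\ell!}\ll q^{-1/30}\,e^{|s|} = q^{-1/30+o(1)},
\]
since $|s|\ll\log q/(\log\log q)^2$. For the tail $k+\ell\ge K$, I will control the difference between $e^{i\re(sz)}$ and its truncated Taylor polynomial by the standard remainder $|s|^K|z|^K/K!$. On the random side, Proposition \ref{BoundMomRand} together with Stirling yields a tail of order $(ec|s|\log K/K)^K$, and with the chosen parameters the ratio $|s|\log K/K$ is bounded by an absolute constant times $b_0$; choosing $b_0$ small enough forces this quantity to be at most $1/e$ and produces a random tail of order $e^{-K}\le\exp(-\log q/(50\log\log q))$. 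On the $L$-function side, Corollary \ref{ASBound} lets me split $\F_q$ into a ``good'' subset on which $|L^\prime/L(1,\chi)|\ll\log\log q$, contributing the same bound as the random side, and a remaining set of $O(q^{3/4})$ characters on which the classical bound \eqref{ClassicalBound} gives only $|L^\prime/L(1,\chi)|\ll(\log q)^2$; this latter piece contributes at worst $q^{-1/4}(e|s|C\log^2 q/K)^K\ll q^{-1/4+3/50}=q^{-19/100}$, comfortably inside $\exp(-\log q/(100\log\log q))$.

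Combining the three error sources — the matching error $q^{-1/30+o(1)}$, the random and good-character Taylor tails $\exp(-\log q/(50\log\log q))$, and the bad-character tail $q^{-19/100}$ — yields the theorem. The main technical balance is the interplay between $K$ and $|s|$: the admissible truncation level $K\sim\log q/\log\log q$ is essentially forced by Theorem \ref{MomentsLogDer}, while the Taylor tails are of size $(e|s|\log K/K)^K$, so $|s|$ must be smaller than $K$ by a factor of $\log\log q$ in order for Stirling to win. This is precisely what fixes the admissible window $|u|,|v|\le b_0\log q/(\log\log q)^2$ in the statement, and forces $b_0$ to be small enough to absorb the absolute constants coming from Corollary \ref{ASBound}, Proposition \ref{BoundMomRand}, and the classical zero-free region used in \eqref{ClassicalBound}.
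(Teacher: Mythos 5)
Your proposal is correct and follows the paper's overall strategy closely: Taylor-expand the exponential, truncate at $K\asymp\log q/\log\log q$, match moments term by term using Theorem~\ref{MomentsLogDer} (the matching error $q^{-1/30}e^{|s|}=q^{-1/30+o(1)}$ is exactly as in the paper), and bound the two tails separately, with the random tail controlled by Proposition~\ref{BoundMomRand} plus Stirling. The single place you diverge is the tail on the $L$-function side: the paper sets $N=\lfloor\log q/(100\log\log q)\rfloor$ and bounds $\frac{1}{q-1}\sum_{\chi\in\F_q}|L'/L(1,\chi)|^{2N}$ by transferring it to $\ex\big(|\ld(1,\X)|^{2N}\big)$ via Theorem~\ref{MomentsLogDer} and then invoking Proposition~\ref{BoundMomRand}, so the one $2N$-th moment bound covers all characters in $\F_q$ at once; you instead split $\F_q$ via Corollary~\ref{ASBound} into characters satisfying $|L'/L(1,\chi)|\ll\log\log q$ and the $O(q^{3/4})$ exceptions satisfying \eqref{ClassicalBound}, handling each set by the pointwise Lagrange remainder. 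Both routes are sound (your exponent $q^{-19/100}$ for the bad set should really be about $q^{-21/100}$, but either is a genuine power saving); the paper's version is slightly more economical, since it reuses the already-established moment asymptotic rather than appealing again to the zero-density input underlying Corollary~\ref{ASBound}, and it avoids having to argue separately about the exceptional set $\E_q$ at this stage.
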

\begin{proof}
Let $N= \lfloor \log q/(100\log\log q)\rfloor$ and put $r= \max(|u|, |v|)$.  Recalling \eqref{Fq-def} and using the Taylor expansion of $e^u$, we have
\begin{equation}\label{Char1}
\begin{aligned}
\Phi_{q}(u,v)
&= \frac{1}{q-1} \sum_{\chi\in \F_q}
\exp\Bigl(i u \re\frac{L^\prime}{L}(1, \chi)+iv \im\frac{L^\prime}{L}(1, \chi)\Bigr) +O\Bigl(\frac{1}{q}\Bigr)\\
&= \sum_{n\leq 2N} \frac{i^n}{n!} \frac{1}{q-1}\sum_{\chi\in \F_q}\Bigl(u \re\frac{L^\prime}{L}(1, \chi)+v \im\frac{L^\prime}{L}(1, \chi)\Bigr)^n \\
& \quad \quad + 
O\Bigl(\frac{(2r)^{2N}}{(2N)!}\frac{1}{q-1}\sum_{\chi\in \F_q}\Bigl|\frac{L^\prime}{L}(1, \chi)\Bigr|^{2N}+\frac1q\Bigr).
\end{aligned}
\end{equation}
Now, by Theorem \ref{MomentsLogDer} and Proposition \ref{BoundMomRand} we get 
\begin{equation}\label{UpperMomentsRand2}
\frac{1}{q-1}\sum_{\chi\in \F_q}\Bigl|\frac{L^\prime}{L}(1, \chi)\Bigr|^{2N}= 
\ex\left(\left|\ld(1, \X)\right|^{2N}\right) +O\left(q^{-1/30}\right) \ll (c\log N)^{2N}.
\end{equation}
Hence, by Stirling's formula the error term of \eqref{Char1} is 
$$ \ll \Bigl(\frac{3c r\log N}{N}\Bigr)^{2N} +\frac{1}{q} \ll e^{-N},$$
by our assumption on $u$ and $v$, if $b_0$ is small enough. 

Now, let $z_1= (u-iv)/2$ and $z_2= (u+iv)/2$. Then, it follows from Theorem \ref{MomentsLogDer} that the inner sum in the main term of \eqref{Char1} equals
\begin{equation}\label{Char2} 
\begin{aligned}
&\frac{1}{q-1}\sum_{\chi\in \F_q}\Bigl(u \re\frac{L^\prime}{L}(1, \chi)+v \im\frac{L^\prime}{L}(1, \chi)\Bigr)^n \\
&= \frac{1}{q-1}\sum_{\chi\in \F_q}\Bigl(z_1\frac{L^\prime}{L}(1, \chi)+z_2 \overline{\frac{L^\prime}{L}(1, \chi)}\Bigr)^n\\
& = \sum_{j=0}^n \binom{n}{j} z_1^j z_2^{n-j} \frac{1}{q-1}\sum_{\chi\in \F_q} \Bigl(\frac{L^\prime}{L}(1, \chi)\Bigr)^j  \Bigl(\overline{\frac{L^\prime}{L}(1, \chi)}\Bigr)^{n-j}\\
& = \sum_{j=0}^n \binom{n}{j} z_1^j z_2^{n-j} \ex\left(\ld(1, \X)^j \overline{\ld(1, \X)}^{n-j}\right) + O\left((2r)^nq^{-1/30} \right)\\
& = \ex\Big(\big(u \re\ld(1, \X)+v \im\ld(1, \X)\big)^n\Big) + O\left((2r)^nq^{-1/30} \right).
\end{aligned}
\end{equation}
Now, repeating the same argument leading to \eqref{Char1} but for the random model $\ld(1, \X)$, and using the bound \eqref{UpperMomentsRand2}  we deduce that 
$$
\Phi_{\textup{rand}}(u,v)= \sum_{n\leq 2N} \frac{i^n}{n!}\ex\Big(\big(u \re\ld(1, \X)+v \im\ld(1, \X)\big)^n\Big) + O\bigl(e^{-N}\bigr). 
$$ 
Finally, combining this estimate with \eqref{Char1} and \eqref{Char2} completes the proof.
\end{proof}

To deduce Theorem \ref{Discrepancy} from Theorem \ref{Characteristic} we use Beurling-Selberg functions.
For $z\in \mathbb C$ let
\[
H(z) =\Bigl( \frac{\sin \pi z}{\pi} \Bigr)^2 \Bigl( \sum_{n=-\infty}^{\infty} \frac{\textup{sgn}(n)}{(z-n)^2}+\frac{2}{z}\Bigr)
\qquad\mbox{and} \qquad K(z)=\Big(\frac{\sin \pi z}{\pi z}\Big)^2.
\]
Beurling proved that the function $B^+(x)=H(x)+K(x)$
majorizes $\textup{sgn}(x)$ and its Fourier transform
has restricted support in $(-1,1)$. Similarly, the function $B^-(x)=H(x)-K(x)$ minorizes $\textup{sgn}(x)$ and its Fourier
transform has the same property (see Lemma 5 of Vaaler \cite{Va}).

Let $\Delta>0$ and $a,b$ be real numbers with $a<b$. Take $\mathcal I=[a,b]$
and
define
\[
F_{\mathcal{I}, \Delta} (z)=\frac12 \Big(B^-(\Delta(z-a))+B^-(\Delta(b-z))\Big).
\]
Then we have the following lemma, which is proved in \cite{LLR} (see Lemma 4.7 therein and the discussion above it).

\begin{lem} \label{lem:functionbd}
The function $F_{\mathcal{I}, \Delta}$ satisfies the following properties
\begin{itemize}
\item[1.]
For all $x\in \mathbb{R}$ we have $
|F_{\mathcal{I}, \Delta}(x)| \le 1
$ and  
\begin{equation} \label{l1 bd}
0 \le \mathbf 1_{\mathcal I}(x)- F_{\mathcal{I}, \Delta}(x)\le K(\Delta(x-a))+K(\Delta(b-x)).
\end{equation}
\item[2.]
 The Fourier transform of $F_{\mathcal{I}, \Delta}$ is
\begin{equation} \label{Fourier}
\widehat F_{\mathcal{I}, \Delta}(\xi)=
\begin{cases}\widehat{ \mathbf 1}_{\mathcal I}(\xi)+O\big(1/\Delta \big) & \mbox{ if } |\xi| < \Delta, \\
0 & \mbox{ if } |\xi|\ge \Delta.
\end{cases}
\end{equation}

\end{itemize}
\end{lem}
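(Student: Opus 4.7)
My plan is to derive both parts directly from the minorization/majorization and Fourier-support properties of the Beurling--Selberg functions $B^\pm$ recalled just before the lemma, together with the pointwise identity $\mathbf 1_{\mathcal I}(x)=\tfrac12(\textup{sgn}(x-a)+\textup{sgn}(b-x))$, valid for $x\notin\{a,b\}$.

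For part~1, I would first establish the two inequalities in \eqref{l1 bd}. The lower bound $\mathbf 1_{\mathcal I}(x)-F_{\mathcal I,\Delta}(x)\ge 0$ follows term by term from the minorization $B^-(y)\le \textup{sgn}(y)$ applied with $y=\Delta(x-a)$ and $y=\Delta(b-x)$. For the upper bound, I would use the identity $B^+(y)-B^-(y)=2K(y)$ together with $\textup{sgn}(y)\le B^+(y)$ to deduce $\textup{sgn}(y)-B^-(y)\le 2K(y)$; summing the two instances and halving yields the stated estimate. The pointwise bound $|F_{\mathcal I,\Delta}(x)|\le 1$ requires slightly more care: the upper bound $\le 1$ follows from $B^-\le 1$ termwise, while the lower bound $\ge -1$ can be obtained by combining the quantitative bound $B^-(y)\ge -1-2K(y)$ with a brief case analysis depending on whether $x$ lies in $[a,b]$, or by invoking a direct bound from Vaaler~\cite{Va}.

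For part~2, the Fourier-support statement is immediate from the Beurling--Selberg construction: since $\widehat{B^\pm}$ is supported in $[-1,1]$, the rescaled functions $B^-(\Delta(\cdot-a))$ and $B^-(\Delta(b-\cdot))$ have Fourier transforms supported in $[-\Delta,\Delta]$ (the shifts and reflection only multiply by unimodular factors), so $\widehat F_{\mathcal I,\Delta}$ vanishes for $|\xi|\ge \Delta$. To compare $\widehat F_{\mathcal I,\Delta}(\xi)$ with $\widehat{\mathbf 1}_{\mathcal I}(\xi)$ for $|\xi|<\Delta$, I would integrate \eqref{l1 bd} and use $\int_{\mathbb R} K(y)\,dy=1$ to obtain the $L^1$ estimate $\|\mathbf 1_{\mathcal I}-F_{\mathcal I,\Delta}\|_{L^1}\le 2/\Delta$, and then conclude via the trivial bound $|\widehat g(\xi)|\le \|g\|_{L^1}$ that $\widehat F_{\mathcal I,\Delta}(\xi)=\widehat{\mathbf 1}_{\mathcal I}(\xi)+O(1/\Delta)$, uniformly in $\xi$.

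The only genuine obstacle is the pointwise bound $|F_{\mathcal I,\Delta}|\le 1$, which is not a formal consequence of the majorant/minorant setup alone and relies on a quantitative two-sided estimate for $B^-$; the remaining assertions are structural consequences of the defining properties of $B^\pm$ and the normalization $\int K=1$.
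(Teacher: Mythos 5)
The paper does not actually prove this lemma; it cites Lemma~4.7 of \cite{LLR} directly (``see Lemma 4.7 therein and the discussion above it''). Your argument reconstructs the standard Beurling--Selberg proof behind that lemma, and most of it is correct: the two inequalities in \eqref{l1 bd} follow as you say from the identity $\mathbf 1_{\mathcal I}(x)=\tfrac12\bigl(\textup{sgn}(x-a)+\textup{sgn}(b-x)\bigr)$ together with $B^-\le \textup{sgn}$ and $B^+-B^-=2K$; the vanishing of $\widehat F_{\mathcal I,\Delta}$ for $|\xi|\ge\Delta$ follows because $\widehat{B^\pm}$ is supported in $[-1,1]$ and dilations, translations and reflections only rescale the Fourier support and introduce phases; and the $O(1/\Delta)$ estimate follows from integrating \eqref{l1 bd}, using $\int_{\mathbb R}K=1$ to get $\|\mathbf 1_{\mathcal I}-F_{\mathcal I,\Delta}\|_{L^1}\le 2/\Delta$, and then $|\widehat g(\xi)|\le\|g\|_{L^1}$. (One should note in passing that $F_{\mathcal I,\Delta}\in L^1$, since $0\le \mathbf 1_{\mathcal I}-F_{\mathcal I,\Delta}\le K(\Delta(\cdot-a))+K(\Delta(b-\cdot))\in L^1$, so the classical Fourier transform is the right object here.)

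The one place your sketch does not close, and which you rightly single out, is the lower bound $F_{\mathcal I,\Delta}(x)\ge -1$. The upper bound $F_{\mathcal I,\Delta}\le 1$ is indeed immediate from $B^-\le \textup{sgn}\le 1$ termwise. But the remedy you propose --- combining $B^-(y)\ge -1-2K(y)$ with a case split on whether $x\in[a,b]$ --- only closes the interior case: for $x\in[a,b]$, \eqref{l1 bd} gives $F_{\mathcal I,\Delta}(x)\ge 1-K(\Delta(x-a))-K(\Delta(b-x))\ge -1$, since each $K\le 1$. For $x\notin[a,b]$ the same inequality gives only $F_{\mathcal I,\Delta}(x)\ge -\bigl(K(\Delta(x-a))+K(\Delta(b-x))\bigr)$, and this sum of two $K$-values genuinely exceeds $1$ near a short interval (at $x=a$ it equals $1+K(\Delta(b-a))$), so this route yields only $F_{\mathcal I,\Delta}\ge -2$. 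Getting the sharp constant $-1$ requires more refined information about $B^\pm$ near the origin than $\textup{sgn}-2K\le B^-\le \textup{sgn}$ alone provides (for instance one can Taylor-expand $B^-(z)=-1+2z+\tfrac{\pi^2}{3}z^2+O(z^3)$ to see $B^-(u)+B^-(v)>-2$ for $u+v>0$ in a neighbourhood of the origin, and patch with the far-field bounds), or one can simply import the pointwise bound from Vaaler~\cite{Va} as you suggest. It is worth remarking that, for the application in the proof of Theorem~\ref{Discrepancy}, the constant $1$ is not actually needed: part~1 is only used to bound $|F_{\mathcal J,\Delta}|$ by an absolute constant so as to control a nonnegative error sum, and the bound $|F_{\mathcal I,\Delta}|\le 2$ that your argument does establish would serve equally well.
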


\begin{proof}[Proof of Theorem \ref{Discrepancy}]
First, Corollary \ref{ASBound} shows that it suffices to consider rectangles $\mathcal{R}$ contained in $[-(\log\log q)^2, (\log\log q)^2]^{2}.$ Let $\mathcal{R}=[a,b] \times[c,d]$, with
$|b-a|, |c-d|\le 2(\log\log q)^2$. We also write $\mathcal I=[a,b]$ and $\mathcal J=[c,d]$. 

Let $\Delta=b_0(\log q)/(\log \log q)^2$ where $b_0$ is the corresponding constant in Theorem \ref{Characteristic}. By Fourier inversion, \eqref{Fourier}, and Theorem \ref{Characteristic} we have that
\begin{equation} \label{long est}
\begin{split}
&\frac{1}{q-1} \sum_{\chi\neq \chi_0}  F_{\mathcal I, \Delta} \Bigl(\textup{Re}\frac{L^\prime}{L}(1, \chi)\Bigr) 
F_{\mathcal J, \Delta}\Bigl(\textup{Im}\frac{L^\prime}{L}(1, \chi)\Bigr)\\
&
=\frac{1}{(2\pi)^2}\int_{-\infty}^{\infty}\int_{-\infty}^{\infty}  \widehat{F}_{\mathcal I, \Delta} (u) 
\widehat{F}_{\mathcal J, \Delta}(v)  \Phi_q(u, v) \, d u \, d v \\
&
= \frac{1}{(2\pi)^2}\int_{-\Delta}^{\Delta}\int_{-\Delta}^{\Delta}  \widehat{F}_{\mathcal I, \Delta} (u) 
\widehat{F}_{\mathcal J, \Delta}(v)  \Phi_{\textup{rand}}(u, v) \, d u \, d v +O\Bigl(\frac{\big(\Delta(\log\log q)^2\big)^{2}}{(\log q)^{10}}\Bigr)\\
& 
=\mathbb E \Bigl(F_{\mathcal I, \Delta} \big(\textup{Re}\ld(1, \X)\bigr) 
F_{\mathcal J, \Delta}\big( \textup{Im} \ld(1, \X)\big) \Bigr)+O\Bigl(\frac{1}{(\log q)^2}\Bigr).
\end{split}
\end{equation}

Next note that $\widehat K(\xi)=\max(0,1-|\xi|)$. Applying Fourier inversion, Theorem \ref{Characteristic},
and Proposition \ref{CharDecTrunc} we obtain
\begin{equation} \notag
\begin{split}
& \frac{1}{q-1} \sum_{\chi\neq \chi_0}  K\Big( \Delta \cdot \Big(\textup{Re} \frac{L^\prime}{L}(1, \chi)-\alpha\Big)\Big)
=\frac{1}{2\pi\Delta}\int_{-\Delta}^{\Delta}\Big(1-\frac{|\xi|}{\Delta}\Big) e^{-i \alpha \xi} \Phi_q(\xi,0) \, d\xi \ll  \frac{1}{\Delta},
\end{split}
\end{equation}
where $\alpha$ is an arbitrary real number. By this and \eqref{l1 bd} we get that
\begin{equation} \label{K bd}
\frac{1}{q-1} \sum_{\chi\neq \chi_0}  F_{\mathcal I, \Delta}\Bigl(\textup{Re} \frac{L^\prime}{L}(1, \chi)\Bigr) 
= \frac{1}{q-1} \sum_{\chi\neq \chi_0} \mathbf 1_{\mathcal I}\Bigl(\textup{Re} \frac{L^\prime}{L}(1, \chi)\Bigr)+O\Bigl(\frac{1}{\Delta}\Bigr).
\end{equation}
Lemma \ref{lem:functionbd} implies that $|F_{\mathcal J, \Delta}(x)| \le 1$ and $ \mathbf 1_{\mathcal I}(x)- F_{\mathcal{I}, \Delta}(x)\geq 0$. Hence, by this and \eqref{K bd} we derive
\begin{equation} \label{IndicatorApprox}
\begin{split}
&\frac{1}{q-1} \sum_{\chi\neq \chi_0}  F_{\mathcal I, \Delta} \Bigl(\textup{Re}\frac{L^\prime}{L}(1, \chi)\Bigr) 
F_{\mathcal J, \Delta}\Bigl(\textup{Im}\frac{L^\prime}{L}(1, \chi)\Bigr) \\
&=\frac{1}{q-1} \sum_{\chi\neq \chi_0}  \mathbf 1_{\mathcal I} \Bigl(\textup{Re}\frac{L^\prime}{L}(1, \chi)\Bigr) 
F_{\mathcal J, \Delta}\Bigl(\textup{Im}\frac{L^\prime}{L}(1, \chi)\Bigr)+O\Bigl(\frac{1}{\Delta}\Bigr).
\end{split}
\end{equation}
A similar argument leading to \eqref{K bd} yields
$$
\frac{1}{q-1} \sum_{\chi\neq \chi_0}  F_{\mathcal J, \Delta}\Bigl(\textup{Im} \frac{L^\prime}{L}(1, \chi)\Bigr) 
= \frac{1}{q-1} \sum_{\chi\neq \chi_0} \mathbf 1_{\mathcal J}\Bigl(\textup{Im} \frac{L^\prime}{L}(1, \chi)\Bigr)+O\Bigl(\frac{1}{\Delta}\Bigr).
$$
Hence, combining this estimate with \eqref{IndicatorApprox} and using Lemma \ref{lem:functionbd} we obtain
\begin{equation} \label{one}
\begin{split}
&\frac{1}{q-1} \sum_{\chi\neq \chi_0}  F_{\mathcal I, \Delta} \Bigl(\textup{Re}\frac{L^\prime}{L}(1, \chi)\Bigr) 
F_{\mathcal J, \Delta}\Bigl(\textup{Im}\frac{L^\prime}{L}(1, \chi)\Bigr)\\
& = \frac{1}{q-1} \Bigl|\Bigl\{\chi\neq \chi_0 \bmod q : \frac{L^\prime}{L}(1, \chi) \in \mathcal{R} \Bigr\}\Bigr|+ O\Bigl(\frac{1}{\Delta}\Bigr).
\end{split}
\end{equation}
A similar argument applied to the random model shows that 
\begin{equation} \label{two}
\mathbb E \Bigl(F_{\mathcal I, \Delta} \big(\textup{Re}\ld(1, \X)\big) 
F_{\mathcal J, \Delta}\big( \textup{Im} \ld(1, \X)\big) \Bigr)= \pr\left(\ld(1, \X) \in \mathcal{R}\right)+ O\Bigl(\frac{1}{\Delta}\Bigr).
\end{equation}
Inserting the estimates \eqref{one} and \eqref{two} in \eqref{long est} completes the proof.
\end{proof}


\section{Computational part; proof of Theorem \ref{comput-mq}  and Corollary \ref{corollary-nonzero}}
\label{CompSec}
Recalling the main definitions in  Section 3 of  \cite{Languasco2019},
we denote the first $\chi$-Bernoulli number as
\(
B_{1,\chi}
: = 
q^{-1}\sum_{a=1}^{q-1}  a \chi(a) ,
 \)
 and  $R(x)= - \frac{\partial^2}{\partial s^2} \zeta(s,x) \vert _{s=0}
=\log (\Gamma_1(x))$, $x>0$, where $\zeta(s,x)$ is the Hurwitz zeta-function, and 
$s\in \C\setminus\{1\}$.
By eq.~(3.5)-(3.6) of Deninger \cite{Deninger1984} we have
 \begin{align*}
R(x) &:=  -\zeta^{\prime\prime}(0) - S(x),\\
S(x) 
&:=
2 \gamma_1 x +(\log x)^{2} 
+ 
    \sum_{m=1}^{+\infty}
\Bigl(
\bigl(\log (x+m)\bigr)^{2} - (\log m)^{2} -2x \frac{\log m}{m}
\Bigr),
\end{align*}
where
 \begin{equation*} 
\gamma_{1}  =
\lim_{N\to+\infty}
\Bigl(
\sum_{j=1}^{N}
\frac{\log j}{j} - \frac{(\log N)^2}{2}
\Bigr) ,
\quad
%
%
\zeta^{\prime\prime}(0)= \frac12 \Bigl(-(\log 2\pi)^2-\frac{\pi^2}{12} + \gamma_1 + \gamma^2 \Bigr)
\end{equation*}
and  $\gamma$ is the Euler-Mascheroni constant.
Arguing as in Sections 3.1-3.2 of  \cite{Languasco2019}
we have that
\begin{equation} 
\label{mq-odd-def}
m^{\textrm{odd}}_q
:=\min_{\chi\, \textrm{odd}} \
\Bigl\vert\frac{L^\prime}{L} (1,\chi) \Bigr\vert  
= 
\min_{\chi\, \textrm{odd}} \
 \Bigl\vert
\gamma + \log(2\pi)  
 +
\frac{1}{B_{1,\overline{\chi}} }
 \sum_{a=1}^{q-1}   \overline{\chi}(a)\log\Bigl(\Gamma\bigl(\frac{a}{q}\bigr)\Bigr)
 \Bigr\vert 
\end{equation}  
and
\begin{equation}
\label{mq-even-def} 
m^{\textrm{even}}_q
:=\min_{\substack{\chi \neq \chi_0\\ \chi\, \textrm{even}}} \
\Bigl\vert\frac{L^\prime}{L} (1,\chi) \Bigr\vert  
=
\min_{\substack{\chi \neq \chi_0\\ \chi\, \textrm{even}}} \
 \Bigl\vert
\gamma + \log(2\pi) 
-\frac{1}{2} 
\frac{\sum_{a=1}^{q-1} \overline{\chi}(a) \ S(a/q)}
{\sum_{a=1}^{q-1} \overline{\chi}(a)\log\bigl(\Gamma(a/q)\bigr)}
 \Bigr\vert ,
\end{equation}
where  $\Gamma$ is Euler's function.
In this way  we can compute $m_q= \min(m^{\textrm{odd}}_q, m^{\textrm{even}}_q)$, $3\le q \le \bound$, $q$ prime,
using the values of $\log\Gamma$ and $S$ obtained in \cite{Languasco2019} for $q\le 10^6$ and with
a new set of computation for $10^6< q\le \bound$.
We recall  that computing
the needed values of $S(a/q)$  is the most time consuming step of the whole procedure; for a detailed description
on how to obtain such values, we  refer to \cite{Languasco2019} and to a new,  much faster, algorithm
developed by Languasco and Righi in \cite{LanguascoR2020}.
In fact it is the latter method that made it possible to obtain the new set of results for $10^6< q\le \bound$.

\subsection{Computations using  PARI/GP (slower but with more digits available).}
First of all we notice that PARI/GP, v.~2.11.4, has the ability to  generate
 the Dirichlet $L$-functions (and  many other $L$-functions)
and hence the computation of $m_q$  can be  performed  directly using \eqref{mq-def}
with a linear cost in the number of calls
of the {\tt lfun} function of PARI/GP. This, at least  
on our Dell OptiPlex-3050 desktop machine
 (Intel i5-7500 processor, 3.40GHz, 
16 GB of RAM and running Ubuntu 18.04.2),
is slower than  using \eqref{mq-odd-def}-\eqref{mq-even-def}.
Using such equations, we wrote a suitable gp script
to obtain the  values of  $m_q$  for every odd prime $q$ up to $1000$  with 
a precision of $30$ digits, see  Table \ref{table1}.

\subsection{Computations using the C programming language and the fftw software library}
For larger values of $q$ we exploited the Fast Fourier Transform approach 
described in Sections 4.1-4.2-4.3 of \cite{Languasco2019}; in this 
case we used the \texttt{fftw} software library. This method
is much faster than the one described in the previous paragraph, but produces less digits in the numerical values obtained for $m_q$.
Using the data on $S(a/q)$ for every odd prime $q\le 10^6$
and $a=1,\dotsc,q-1$ obtained in 
\cite{Languasco2019} and, for $10^6<q\le \bound$, computed 
with the algorithm described in \cite{LanguascoR2020}, we obtained
the values of $m_q$  for every odd prime $q\le \bound$ using the \emph{long double precision} (80 bits)
of the C programming language.  For $q$ up to $10^6$ this just required about one day 
of  time on the Dell Optiplex machine previously mentioned since the data on $S(a/q)$
were already available from \cite{Languasco2019}.
For the remaining $q$-range the new current computation was performed on the 
University of Padova Strategic Research Infrastructure 
Grant 2017: ``CAPRI: Calclo ad Alte Prestazioni per la Ricerca e l'Innovazione'',
\url{http://capri.dei.unipd.it};
this step was performed using at most  60 computing nodes and it required about 48 hours of  time
(the global execution time, obtained by summing the declared computing time on each node, was of 101 days and 6 hours).
Moreover, we recomputed, using a \emph{quadruple precision} (128 bits) version
of our program, the $192$ cases in which we have $m_q<10^{-5}$. 
The total time required for performing such verifications  was about twelve hours
(on the same Optiplex machine already mentioned).
We also remark  that $m^{\textrm{odd}}_q > m^{\textrm{even}}_q$ for $333408$ 
cases  over  a total number of primes
equal to $664578$ ($50.17\%$)
and that $m^{\textrm{even}}_q > m^{\textrm{odd}}_q$ in the remaining $331170$ cases  ($49.83\%$). 

 The  minimal value is $(6311157483\dotsc)\cdot10^{-7}$
 and it is attained at $q=6119053$; 
 the  maximal value is $0.3682816159701500\dotsc$ and it is attained at $q=3$. 

\subsection{Proofs of Theorem \ref{comput-mq} and Corollary \ref{corollary-nonzero}}
An analysis on the  data computed in the previous 
subsections reveals that
\begin{equation}
\label{mq-comput-bounds}
\frac{21}{200q}< m_q < \frac{5}{\sqrt{q}}
\end{equation}
for every odd prime $3 \le q\le \bound$.   This proves Theorem \ref{comput-mq}. Such $m_q$-values
are collected in a comma-separated values (csv) file available, together
with the programs that performed the analysis leading to equation \eqref{mq-comput-bounds},
 at the following web address:
\url{http://www.math.unipd.it/~languasc/smallvalues/results}.  
In Section \ref{tables-figures} we include some scatter plots for the normalised values of $m_q^\prime :=\frac{200} {21}q m_q$
to visualise the truth of the lower bound in \eqref{mq-comput-bounds}.
The plots were obtained  using GNUPLOT, v.5.2, patchlevel 8. 

As a consequence of \eqref{mq-comput-bounds} we have that   
$L^\prime(1,\chi) \ne 0$, for every non trivial primitive Dirichlet character  $\bmod\ q$, 
$3\le q \le \bound$, $q$ prime. Hence Corollary \ref{corollary-nonzero} is proved.

\renewcommand{\bibliofont}{\normalsize}

\vskip0.5cm
{\sc
\noindent
Youness Lamzouri\\
Institut \'Elie Cartan de Lorraine, Universit\'e de Lorraine, BP 70239, 
54506 Vandoeuvre-l\`es-Nancy Cedex, France; 
and Department of Mathematics and Statistics,
York University,
4700 Keele Street,
Toronto, ON,
M3J1P3
Canada.\\
}
\emph{e-mail address}: youness.lamzouri@univ-lorraine.fr

\vskip0.5cm
{\sc
\noindent
 Alessandro Languasco\\
 Universit\`a di Padova,
 Dipartimento di Matematica,
 ``Tullio Levi-Civita'',
Via Trieste 63,
35121 Padova, Italy.
}
\emph{e-mail address}: alessandro.languasco@unipd.it

\newpage
\section{Tables and Figures}
\label{tables-figures}
\begin{table}[htp]
\scalebox{0.5875}{
\begin{tabular}{|c|c|}
\hline
$q$  &  $m_{q}$\\ \hline
$3$ & $0.368281615970147842633237904076\dotsc$ \\
$5$ & $0.180899098585657908884214228728\dotsc$ \\
$7$  & $0.015635689993720378956622751350\dotsc$ \\
$11$ & $0.084218304297040925093687383995\dotsc$ \\
$13$ & $0.300105391273262471564455827946\dotsc$ \\
$17$ & $0.215168738351581113325995469061\dotsc$ \\
$19$ & $0.084913681787711588506979393826\dotsc$ \\
$23$ & $0.222264564054341426307285821914\dotsc$ \\
$29$ & $0.186466418002260383262831736558\dotsc$ \\
$31$ & $0.156365159195612900544888732701\dotsc$ \\
$37$ & $0.084582297210773917089404219321\dotsc$ \\
$41$ & $0.038491048531500073439045451195\dotsc$ \\
$43$ & $0.137995302770293343459953078899\dotsc$ \\
$47$ & $0.035746012624318111324062775199\dotsc$ \\
$53$ & $0.079345452636605470144626619119\dotsc$ \\
$59$ & $0.070814808482221803352134970016\dotsc$ \\
$61$ & $0.004424742139200355181771341999\dotsc$ \\
$67$ & $0.101724238410284799512624760672\dotsc$ \\
$71$ & $0.083677019184249846969185188938\dotsc$ \\
$73$ & $0.037814195629563525097422478059\dotsc$ \\
$79$ & $0.066716629702353438341139676134\dotsc$ \\
$83$ & $0.106137806076174129263066902611\dotsc$ \\
$89$ & $0.091454122541715140553245678638\dotsc$ \\
$97$ & $0.100225166851282154186793751382\dotsc$ \\
$101$ & $0.088532955088052167463294100498\dotsc$ \\
$103$ & $0.067089590517766187761945182653\dotsc$ \\
$107$ & $0.072842375116831038918164998747\dotsc$ \\
$109$ & $0.050769692347897029380594369802\dotsc$ \\
$113$ & $0.137803959714882644119589746949\dotsc$ \\
$127$ & $0.040736836472454861641890942261\dotsc$ \\
$131$ & $0.034839900905728525833611106595\dotsc$ \\
$137$ & $0.173847605183310967783545356651\dotsc$ \\
$139$ & $0.064875376939531262338130695938\dotsc$ \\
$149$ & $0.015584848820092211525156310710\dotsc$ \\
$151$ & $0.076491986002214823571286680298\dotsc$ \\
$157$ & $0.089056313036529923958481456478\dotsc$ \\
$163$ & $0.007515153792183843621864583202\dotsc$ \\
$167$ & $0.031447516352414412954702343316\dotsc$ \\
$173$ & $0.045901456916810041131888422282\dotsc$ \\
$179$ & $0.079209917602758559902760698720\dotsc$ \\
$181$ & $0.083976658136399152061230909218\dotsc$ \\
$191$ & $0.078930822278753140729685017610\dotsc$ \\
$193$ & $0.070704014179180612385369954742\dotsc$ \\
$197$ & $0.044199502798542042544024860233\dotsc$ \\
$199$ & $0.126082784303363107778654204938\dotsc$ \\
$211$ & $0.088113526491133982480553514525\dotsc$ \\
$223$ & $0.057898365920650110981300560279\dotsc$ \\
$227$ & $0.053459670863865895829857481681\dotsc$ \\
$229$ & $0.059371895090913785790744781928\dotsc$ \\
$233$ & $0.041561490891570865670409968939\dotsc$ \\
$239$ & $0.065820141780365230014091775123\dotsc$ \\
$241$ & $0.110515313722982307702505567111\dotsc$ \\
$251$ & $0.098869364562232118021197879586\dotsc$ \\
$257$ & $0.026289614981049793532191665303\dotsc$ \\
$263$ & $0.047887172323274972467095533810\dotsc$ \\
$269$ & $0.043642785796346713737278821355\dotsc$ \\
\hline
\end{tabular}
}
\scalebox{0.5875}{
\begin{tabular}{|c|c|}
\hline
$q$  &  $m_q$\\ \hline
 $271$ & $0.072815958227510407092347024304\dotsc$ \\
$277$ & $0.079967970098330852132644457819\dotsc$ \\
$281$ & $0.060062168588754157048255824462\dotsc$ \\
$283$ & $0.003150668094148233344534460700\dotsc$ \\
$293$ & $0.135103364633442165799200521286\dotsc$ \\
$307$ & $0.080361924803122609976003922642\dotsc$ \\ 
$311$ & $0.020809138839835850375607303360\dotsc$ \\ 
$313$ & $0.038152433030706606745433863772\dotsc$ \\ 
$317$ & $0.120310482217012060618479876460\dotsc$ \\ 
$331$ & $0.090170220408966373277578719363\dotsc$ \\ 
$337$ & $0.075619767636542281468700635945\dotsc$ \\ 
$347$ & $0.061562527980707029174085407677\dotsc$ \\ 
$349$ & $0.024659800399032065009402532442\dotsc$ \\ 
$353$ & $0.097060672416567490052208231772\dotsc$ \\ 
$359$ & $0.159565297851633355627166678429\dotsc$ \\ 
$367$ & $0.120752854906502520642266950975\dotsc$ \\ 
$373$ & $0.036684373998069703748701756310\dotsc$ \\ 
$379$ & $0.062512783798157835888293219183\dotsc$ \\ 
$383$ & $0.072466929059169901487523581460\dotsc$ \\ 
$389$ & $0.035987070773221029949860780588\dotsc$ \\ 
$397$ & $0.027227952015046458842733852917\dotsc$ \\ 
$401$ & $0.104399057003481324855914310851\dotsc$ \\ 
$409$ & $0.046194322516358621884845050091\dotsc$ \\ 
$419$ & $0.035877133426605148541131058899\dotsc$ \\ 
$421$ & $0.092642081105082285944635638338\dotsc$ \\ 
$431$ & $0.060610872703868562233092295828\dotsc$ \\ 
$433$ & $0.045778529517913654069080269487\dotsc$ \\ 
$439$ & $0.108439335102642353657462341324\dotsc$ \\ 
$443$ & $0.034907168270041629941342334682\dotsc$ \\ 
$449$ & $0.068236065877290568789408277549\dotsc$ \\ 
$457$ & $0.111566405402641079895678728654\dotsc$ \\ 
$461$ & $0.059145596894312275391295323265\dotsc$ \\ 
$463$ & $0.053641121975254286882026544921\dotsc$ \\ 
$467$ & $0.082747174213806691900849070014\dotsc$ \\ 
$479$ & $0.053192268343111159252876782608\dotsc$ \\ 
$487$ & $0.072015242309793507430557740674\dotsc$ \\ 
$491$ & $0.088859444946655364010425676492\dotsc$ \\ 
$499$ & $0.036051482588918952743416502474\dotsc$ \\ 
$503$ & $0.101930324454999846629303155693\dotsc$ \\ 
$509$ & $0.046641017175720556427264466580\dotsc$ \\ 
$521$ & $0.064968723343215369312393768633\dotsc$ \\ 
$523$ & $0.049490019983931771973278347945\dotsc$ \\ 
$541$ & $0.078201802572578301759951022229\dotsc$ \\ 
$547$ & $0.011446072603108451883833352085\dotsc$ \\ 
$557$ & $0.032649429863770489497073765816\dotsc$ \\ 
$563$ & $0.040012790875967792748467993796\dotsc$ \\ 
$569$ & $0.054104318075237066903173731505\dotsc$ \\ 
$571$ & $0.098800274926131853642566583417\dotsc$ \\ 
$577$ & $0.057616230049140599007139782512\dotsc$ \\ 
$587$ & $0.028453221139884652469833267724\dotsc$ \\ 
$593$ & $0.040633070972329746592792414650\dotsc$ \\ 
$599$ & $0.040806120000547802001712541751\dotsc$ \\ 
$601$ & $0.046479560640748491238754689477\dotsc$ \\ 
$607$ & $0.053431321823469925598884667480\dotsc$ \\ 
$613$ & $0.026979398051501163961309875661\dotsc$ \\ 
$617$ & $0.086455608244385173808632307637\dotsc$ \\ 
\hline
\end{tabular}
}
\scalebox{0.5875}{
\begin{tabular}{|c|c|}
\hline
$q$  &  $m_q$\\ \hline
$619$ & $0.036809001877106866180335915559\dotsc$ \\ 
$631$ & $0.021740655876972247835117056143\dotsc$ \\ 
$641$ & $0.052039782792164018630583875044\dotsc$ \\ 
$643$ & $0.094730957910075796006995797872\dotsc$ \\ 
$647$ & $0.022707532347681205317339648994\dotsc$ \\ 
$653$ & $0.010979306879031359219927573518\dotsc$ \\ 
$659$ & $0.046308015595165386046394467522\dotsc$ \\ 
$661$ & $0.103916589731771042952256481790\dotsc$ \\ 
$673$ & $0.049755263776059657483605717979\dotsc$ \\ 
$677$ & $0.057693180910059875967225881005\dotsc$ \\ 
$683$ & $0.045197591824286077977576512346\dotsc$ \\ 
$691$ & $0.027389773133139679131666108333\dotsc$ \\ 
$701$ & $0.025792063381175160731735442054\dotsc$ \\ 
$709$ & $0.034849613800713838295949488339\dotsc$ \\ 
$719$ & $0.032014828699399819280375472745\dotsc$ \\ 
$727$ & $0.046548618667915043102907340089\dotsc$ \\ 
$733$ & $0.042139831461577750336209045957\dotsc$ \\ 
$739$ & $0.015708957869736012440269724376\dotsc$ \\ 
$743$ & $0.116540873617071834980829355749\dotsc$ \\ 
$751$ & $0.152061136399012561767846170124\dotsc$ \\ 
$757$ & $0.056975681664997732706139153050\dotsc$ \\ 
$761$ & $0.075261045249458606915771017644\dotsc$ \\ 
$769$ & $0.013153110800029449125939080369\dotsc$ \\ 
$773$ & $0.030651359792302555174852939805\dotsc$ \\ 
$787$ & $0.019038244204322518422127334614\dotsc$ \\ 
$797$ & $0.021567891774257723672469454215\dotsc$ \\ 
$809$ & $0.063775218225541117846581797608\dotsc$ \\ 
$811$ & $0.092226442843305532398212849968\dotsc$ \\ 
$821$ & $0.063566957442518142038599317693\dotsc$ \\ 
$823$ & $0.047289370792256898594184490183\dotsc$ \\ 
$827$ & $0.012749597279993981378378767012\dotsc$ \\ 
$829$ & $0.025998031567165045170854485526\dotsc$ \\ 
$839$ & $0.084436292109806136993754070909\dotsc$ \\ 
$853$ & $0.058287674110997917235105303450\dotsc$ \\ 
$857$ & $0.072425046291110298533023943432\dotsc$ \\ 
$859$ & $0.035678865131857745826151075766\dotsc$ \\ 
$863$ & $0.046770144442337100055402418780\dotsc$ \\ 
$877$ & $0.035293865694185936926398081489\dotsc$ \\ 
$881$ & $0.054634183425280742422051742979\dotsc$ \\ 
$883$ & $0.037633242851010750275554025237\dotsc$ \\ 
$887$ & $0.022554269438627487049925937822\dotsc$ \\ 
$907$ & $0.052643295498117140364919564980\dotsc$ \\ 
$911$ & $0.035463492708556820560659148802\dotsc$ \\ 
$919$ & $0.014673360300950560145530422161\dotsc$ \\ 
$929$ & $0.030444805205138918823987682845\dotsc$ \\ 
$937$ & $0.036077911931189997653707966581\dotsc$ \\ 
$941$ & $0.011109552462842771175399709306\dotsc$ \\ 
$947$ & $0.064185134239730262251762819034\dotsc$ \\ 
$953$ & $0.046995905606263748103687560739\dotsc$ \\ 
$967$ & $0.015161604253911836558220368388\dotsc$ \\ 
$971$ & $0.012974123428335617422790144852\dotsc$ \\ 
$977$ & $0.016784091339428433806510647310\dotsc$ \\ 
$983$ & $0.041396365426989204577803463342\dotsc$ \\ 
$991$ & $0.030826538808886821305709019454\dotsc$ \\ 
$997$ & $0.032897666762473802681213392412\dotsc$ \\ 
\phantom{} & \phantom{}  \\ 
\hline
\end{tabular}
}
\caption{\label{table1}
Values of   $m_q$ for every odd prime up to $1000$ with 
$30$-digit precision; computed with PARI/GP, v.~2.11.4 . Total computation time: 3 min.,  13 sec.,  583 millisec. on the Dell
Optiplex machine mentioned before.}
\end{table} 
\newpage

\begin{figure} [H]  
\begin{minipage}{0.48\textwidth}
\centerline{{\tiny $m_q \le 1.1 /\sqrt{q}$, $3\le q\le 100$}}
\includegraphics[scale=0.625,angle=0]{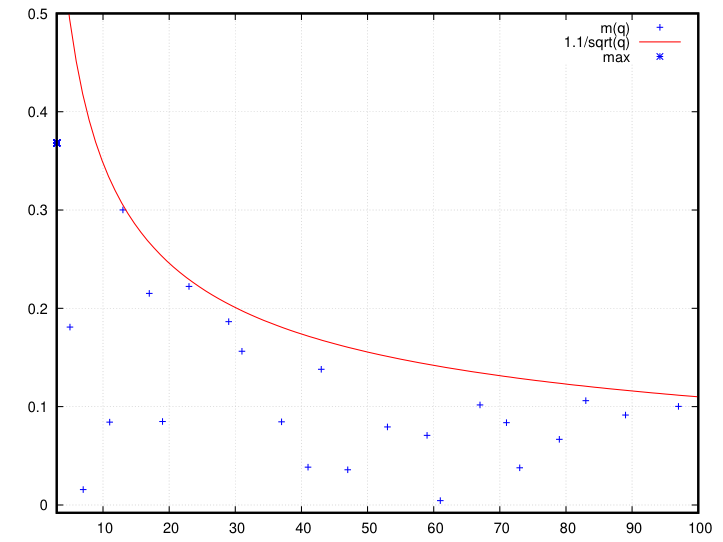}
\end{minipage}
\begin{minipage}{0.48\textwidth}
\centerline{{\tiny $m_q \le 4.25 /\sqrt{q}$, $100\le q\le 10^3$}}
\includegraphics[scale=0.625,angle=0]{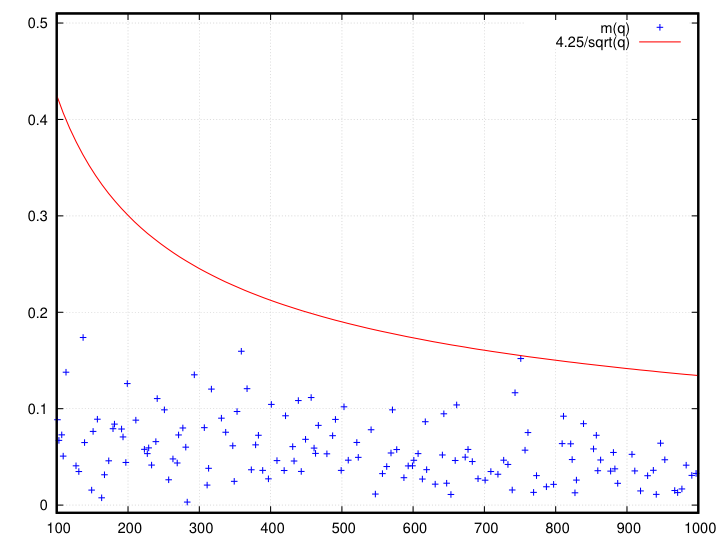}
\end{minipage} 

\vskip1.5cm
\begin{minipage}{0.48\textwidth}
\centerline{{\tiny $m_q \le 4 /\sqrt{q}$, $10^3\le q\le 10^4$}}
\includegraphics[scale=0.625,angle=0]{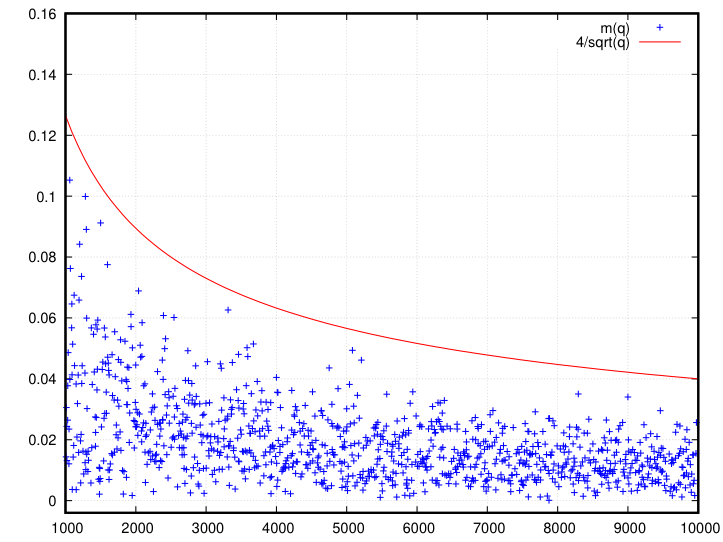}
\end{minipage}
\begin{minipage}{0.48\textwidth}
\centerline{{\tiny $m_q \le 4.2 /\sqrt{q}$, $10^4\le q\le 10^5$}}
\includegraphics[scale=0.625,angle=0]{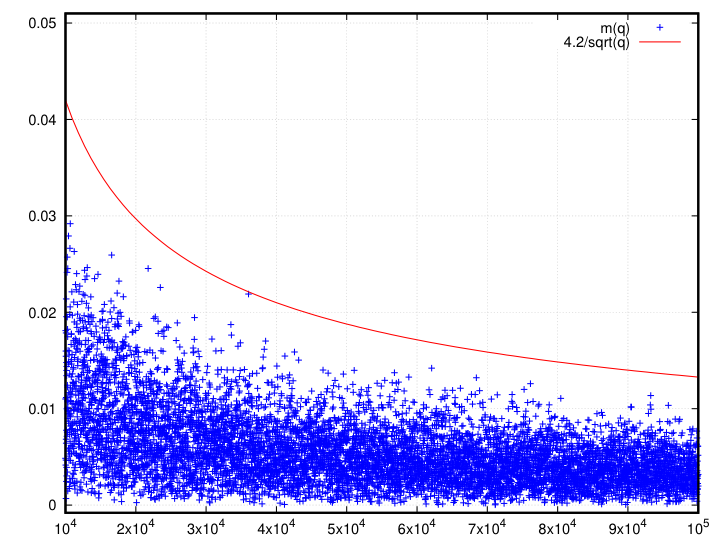}
\end{minipage} 

\vskip1.5cm
\begin{minipage}{0.48\textwidth}
\centerline{{\tiny $m_q \le 5 /\sqrt{q}$, $10^5\le q\le 10^6$}}
\includegraphics[scale=0.625,angle=0]{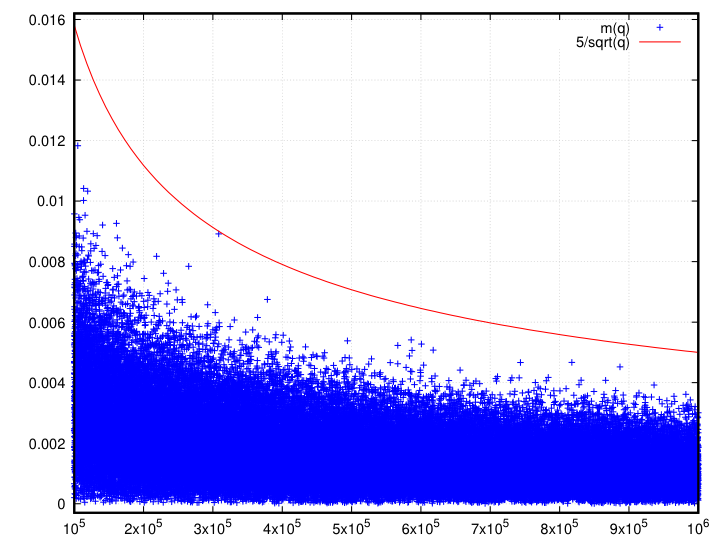}
\end{minipage}
\begin{minipage}{0.48\textwidth}
\centerline{{\tiny $m_q \le 4.85 /\sqrt{q}$, $10^6\le q\le 10^7$}}
\includegraphics[scale=0.625,angle=0]{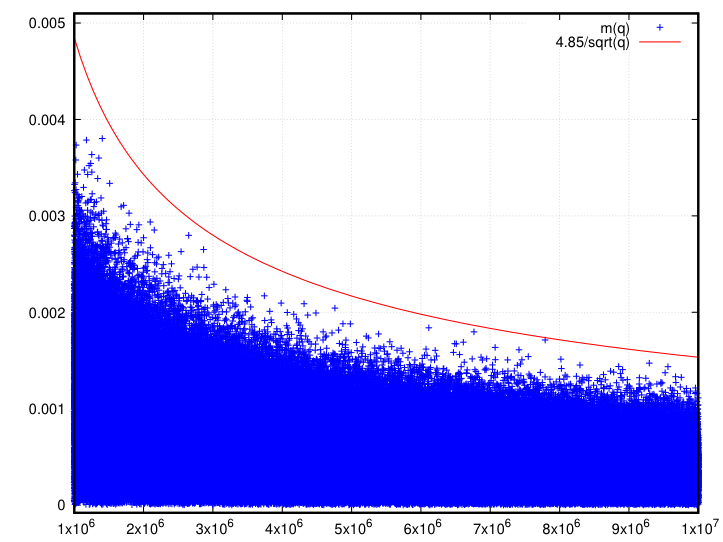}
\end{minipage} 
\caption{The values of $m_q$, $q$ prime, $3\le q\le 10^7$. 
$m_3=0.368281\dotsc$ is the maximal value. The red lines 
 represent  the function $c /\sqrt{q}$ for several values of $c$. 
 }
\label{fig1} 
\end{figure}

\begin{figure} [H]  
\begin{minipage}{0.48\textwidth}
\centerline{{\tiny $m_q^\prime :=\frac{200} {21}q m_q<500$, $3\le q\le 100$}}
\includegraphics[scale=0.625,angle=0]{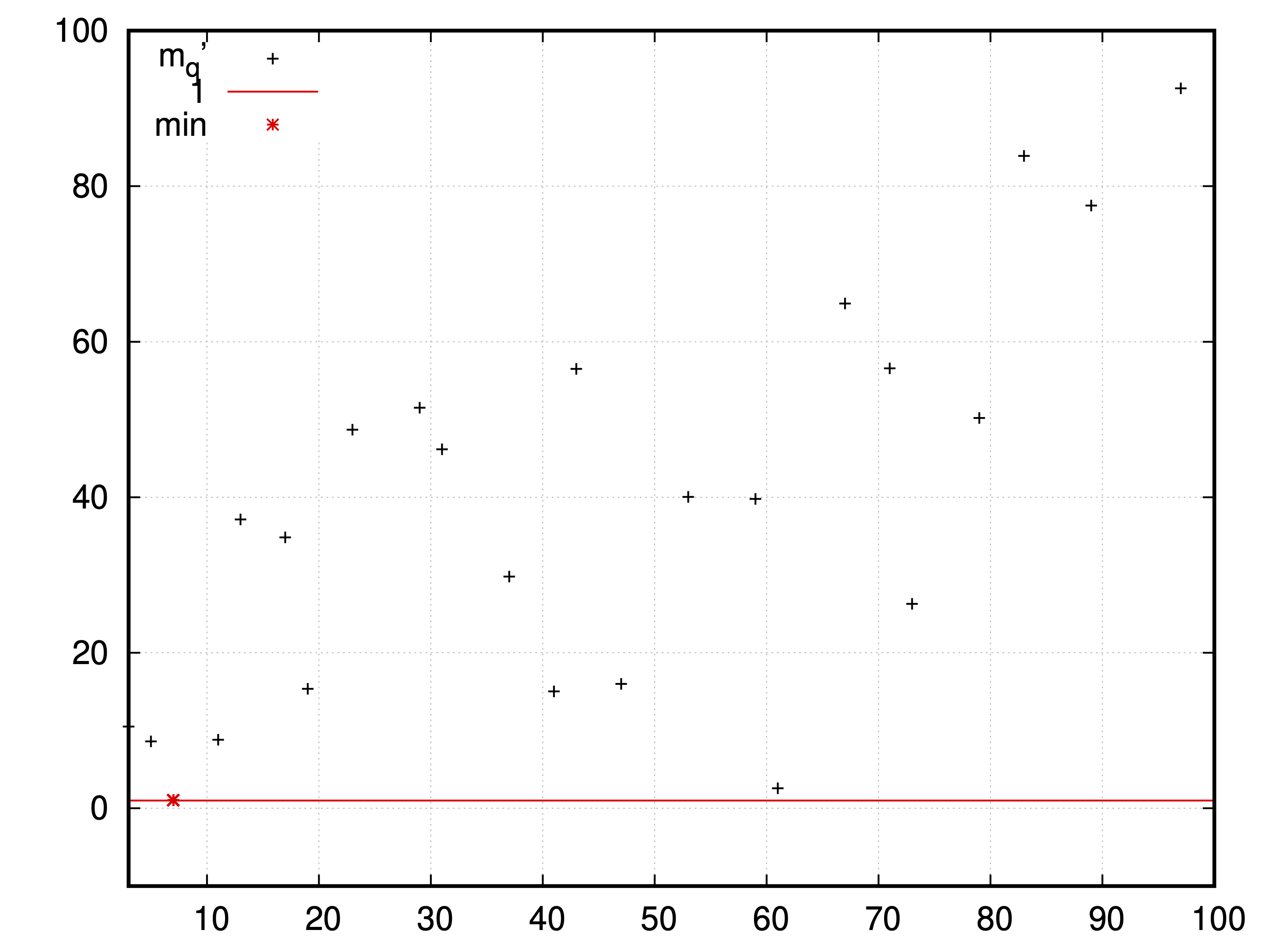}
\end{minipage}
\begin{minipage}{0.48\textwidth}
\centerline{{\tiny $m_q^\prime :=\frac{200} {21}q m_q<500$, $100\le q\le 10^3$}}
\includegraphics[scale=0.625,angle=0]{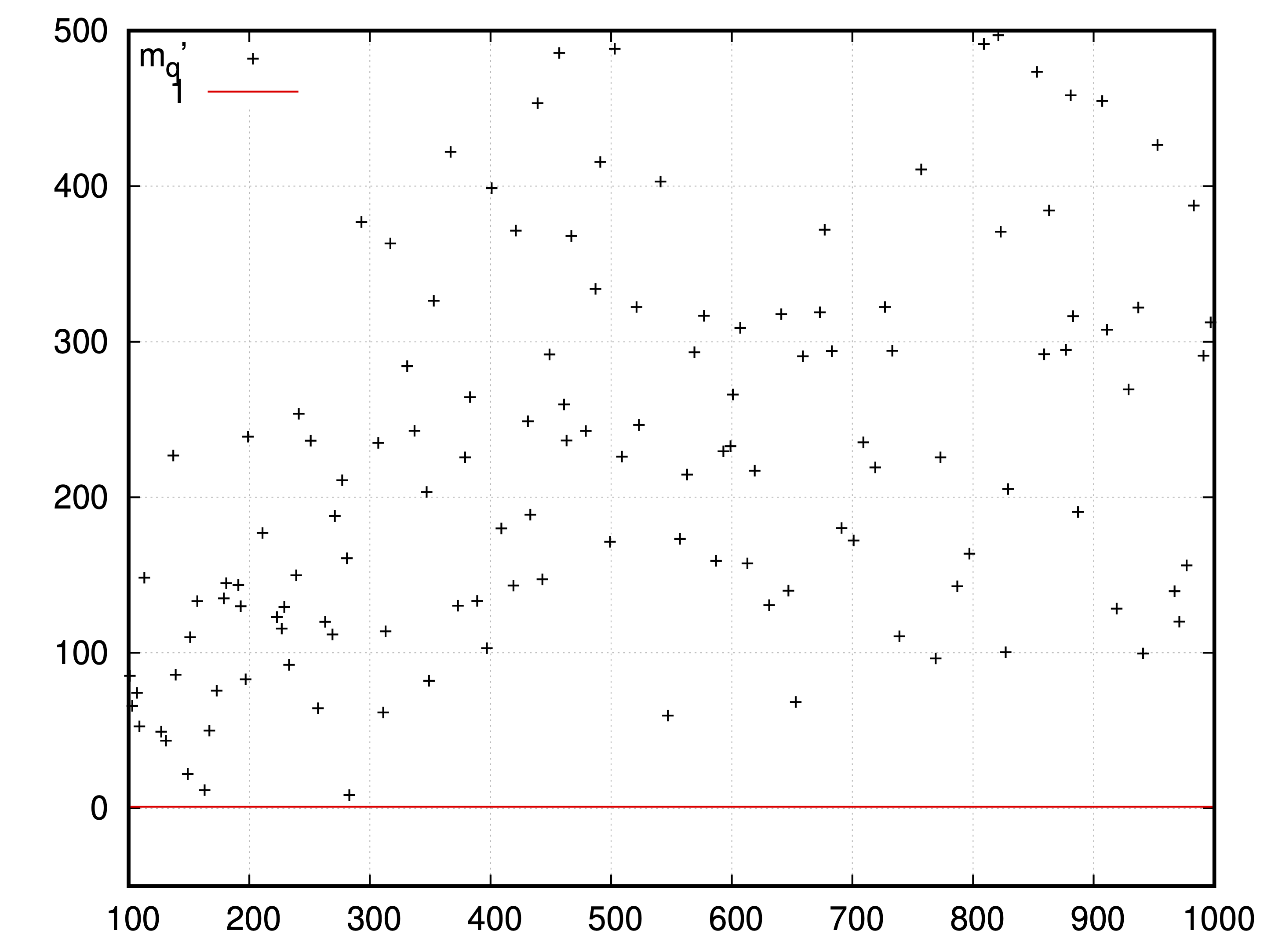}
\end{minipage} 

\vskip1.5cm
\begin{minipage}{0.48\textwidth}
\centerline{{\tiny $m_q^\prime :=\frac{200} {21}q m_q<500$, $10^3\le q\le 10^4$}}
\includegraphics[scale=0.625,angle=0]{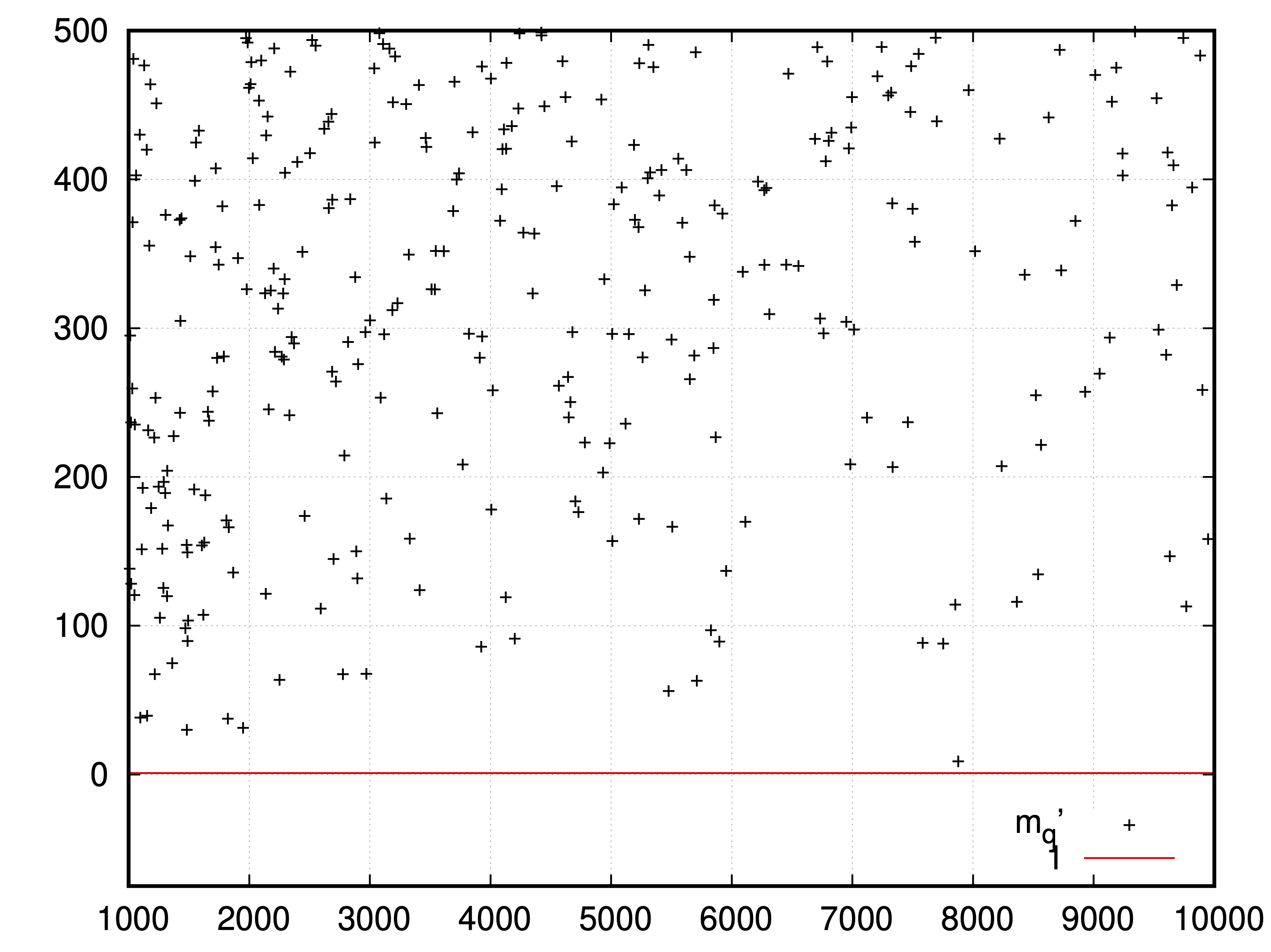}
\end{minipage}
\begin{minipage}{0.48\textwidth}
\centerline{{\tiny $m_q^\prime :=\frac{200} {21}q m_q<500$, $10^4\le q\le 10^5$}}
\includegraphics[scale=0.625,angle=0]{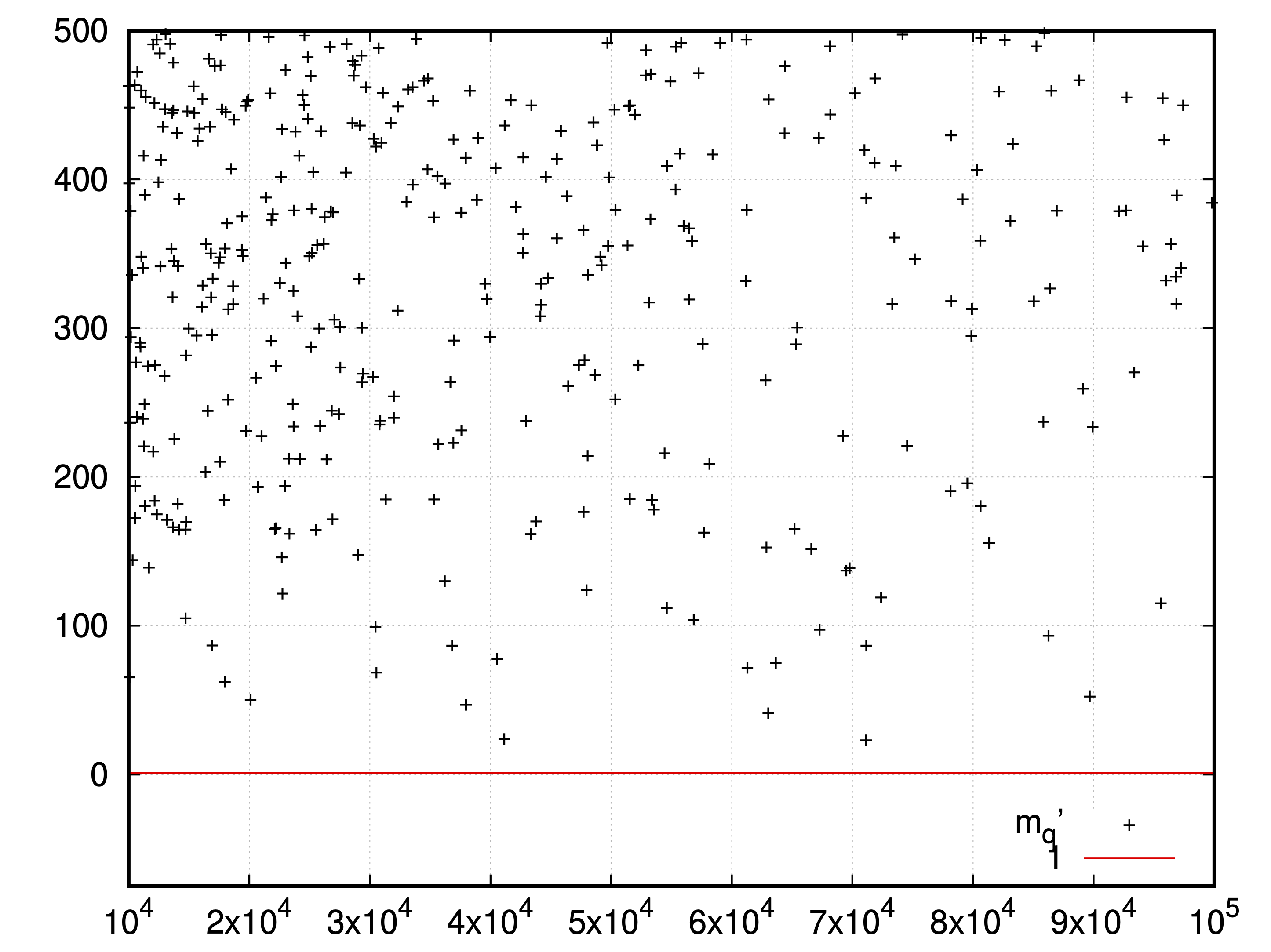}
\end{minipage} 

\vskip1.5cm
\begin{minipage}{0.48\textwidth}
\centerline{{\tiny $m_q^\prime :=\frac{200} {21}q m_q<500$, $10^5\le q\le 10^6$}}
\includegraphics[scale=0.625,angle=0]{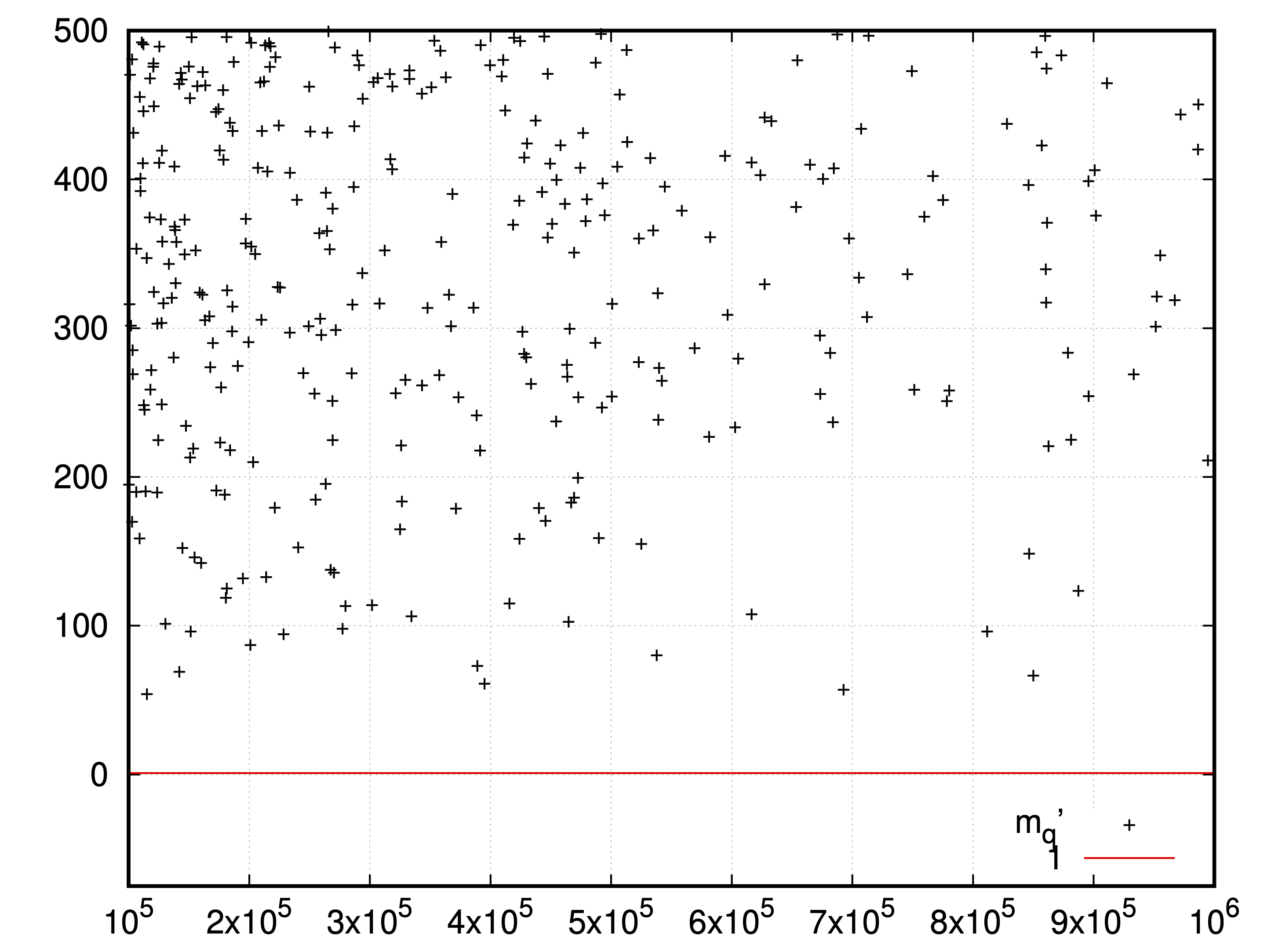}
\end{minipage}
\begin{minipage}{0.48\textwidth}
\centerline{{\tiny $m_q^\prime :=\frac{200} {21}q m_q<500$, $10^6\le q\le 10^7$}}
\includegraphics[scale=0.625,angle=0]{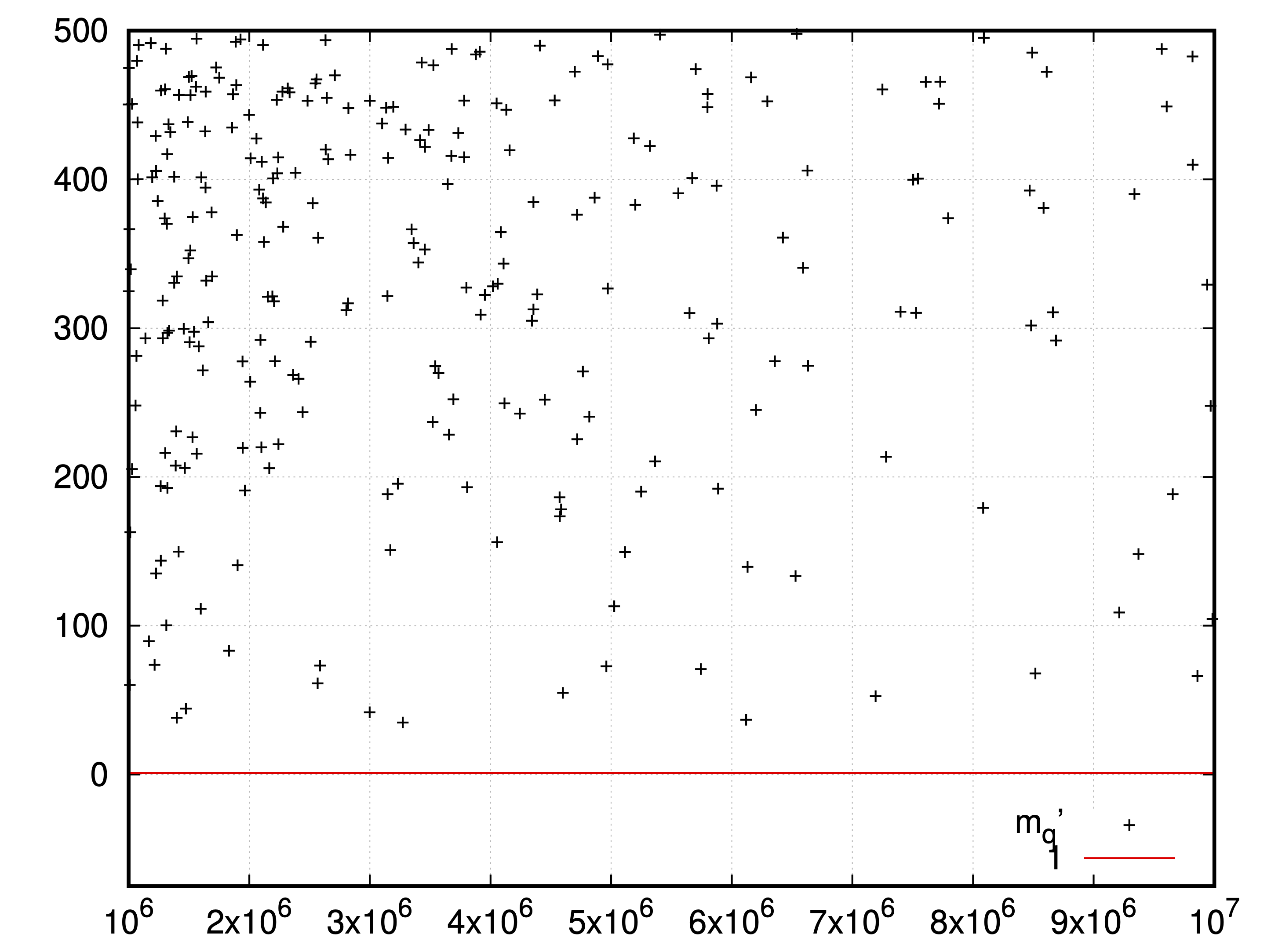}
\end{minipage} 
\caption{The values of $m_q^\prime :=\frac{200} {21}q m_q$, $q$ prime, $3\le q\le 10^7$.
The red line represents the constant function $1$.  
 }
\label{fig2} 
\end{figure}

\begin{figure}[ht] 
\includegraphics[scale=1.6,angle=90]{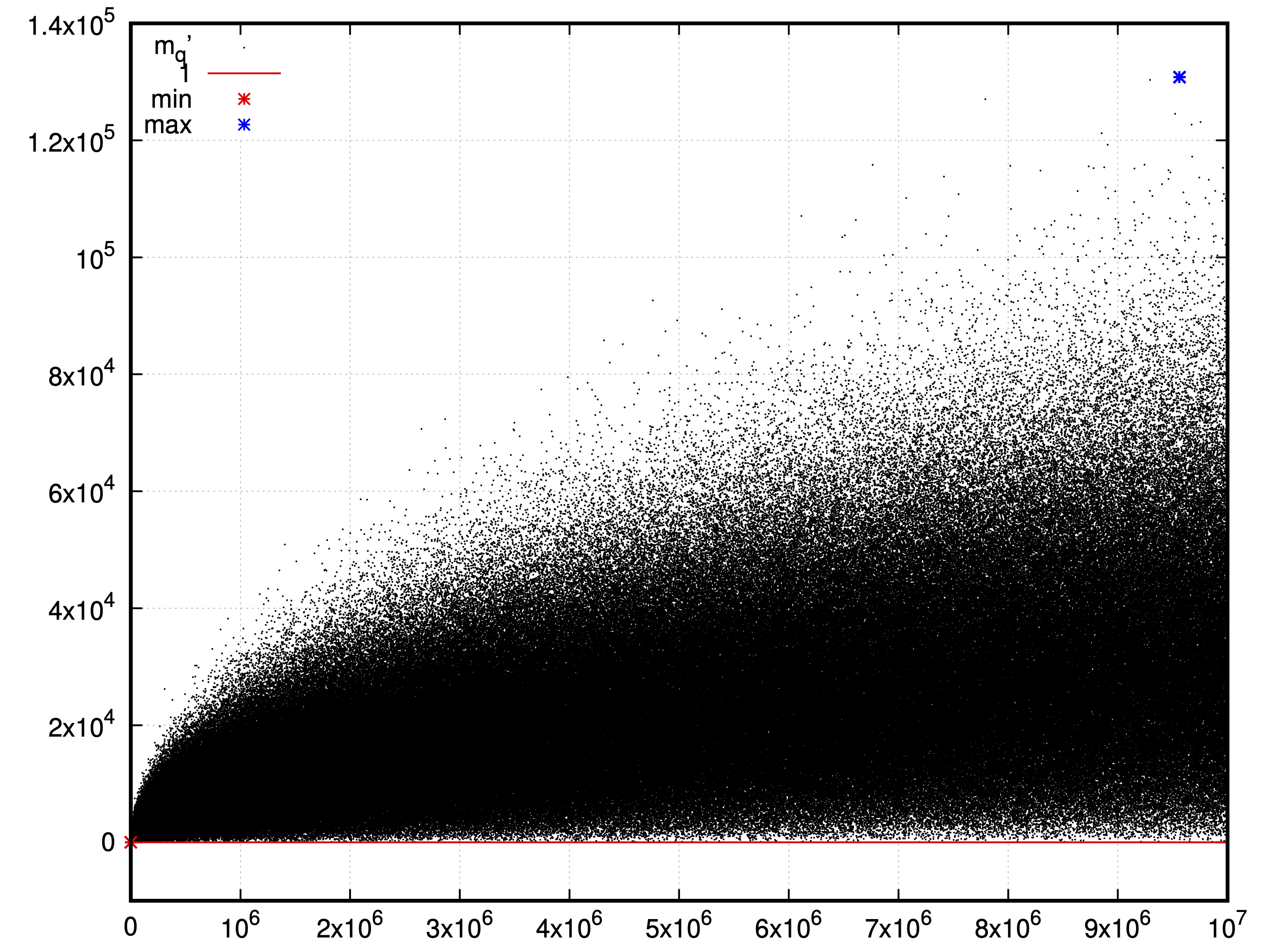}  
\caption{The values of $m_q^\prime :=\frac{200} {21}q m_q$, $q$ prime, $3\le q\le 10^7$.
The red line represents the constant function $1$. 
The minimal value for $m_q^\prime$
is $1.042379\dots$ and it is attained at $q=7$.
The maximal value for $m_q^\prime$
is $130782.760597\dots$ and it is attained at $q=9561863$. 
 }
\label{fig3} 
\end{figure}
 

\begin{thebibliography}{10}

\bibitem{BorJe} V.~Borchsenius, B.~Jessen, \emph{Mean motions 
and values of the Riemann zeta function}, Acta Math. 80 (1948), 97--166.

\bibitem{Da} H.~Davenport, \emph{Multiplicative Number Theory}, third edition, 
Graduate Texts in Mathematics, vol. 74, Springer-Verlag, New York, 2000, 
revised and with a preface by Hugh L. Montgomery, xiv+177 pp.


\bibitem{Deninger1984}
C.~Deninger, \emph{On the analogue of the formula of {C}howla and {S}elberg for
  real quadratic fields}, J. Reine Angew. Math. \textbf{351} (1984), 171--191.

\bibitem{FordLM2014}
K.~Ford, F.~Luca, P.~Moree, \emph{Values of the {E}uler {$\phi$}-function
 not divisible by a given odd prime, and the distribution of
 {E}uler-{K}ronecker constants for cyclotomic fields}, 
 Math. Comp. \textbf{83} (2014), 1447--1476.
 
\bibitem{FFTW}
 M.~Frigo, S.~G.~Johnson,  \emph{The Design and Implementation of FFTW3}, 
 Proceedings of the IEEE \textbf{93 }(2), 216--231 (2005). 
The C library is available at \url{http://www.fftw.org}.  
 
\bibitem{GunMR2011}
S.~Gun, M.~R.~Murty, P.~Rath, \emph{Transcendental nature of special values
  of {$L$}-functions}, Canad. J. Math. \textbf{63} (2011), 136--152. 
  
 \bibitem{Ihara2006}
Y.~Ihara, \emph{{The Euler-Kronecker invariants
in various families of global fields}},
 in V.~Ginzburg, ed., Algebraic Geometry and Number Theory:
In Honor of Vladimir Drinfeld's 50th Birthday, Progress in Mathematics
\textbf{850}, Birkh\"auser Boston, Cambridge, MA, 2006, 407--451.

\bibitem{Ihara2008}
Y.~Ihara, \emph{On ``$M$-functions'' closely related to the distribution of
$L^\prime/L$-values}, Publ. Res. Inst. Math. Sci. \textbf{44} (2008), 
893--954.

\bibitem{IMS} Y.~Ihara, V.~K.~Murty,  M.~Shimura, \emph{On the logarithmic 
derivatives of Dirichlet $L$-functions at $s=1$}, Acta Arith. 137 (3) (2009) 253--276.

\bibitem{La} Y.~Lamzouri, \emph{The distribution of Euler-Kronecker constants of quadratic fields}, 
J. Math. Anal. Appl. 432 (2015), no. 2, 632--653.

\bibitem{LLR} Y.~Lamzouri, S.~Lester, M.~Radziwi\l\l, \emph{Discrepancy bounds 
for the distribution of the Riemann zeta-function and applications}, 
J. Anal. Math. 139 (2019), no. 2, 453--494.

\bibitem{Languasco2019}
 A.~Languasco,  \emph{Efficient computation of the Euler-Kronecker constants for 
 prime cyclotomic fields}, Arxiv (2019), \url{http://arxiv.org/abs/1903.05487}, submitted.
 
\bibitem{LanguascoR2020}
A.~Languasco, L.~Righi, \emph{A fast algorithm to compute the Ramanujan-Deninger  
Gamma-function and some number-theoretic applications}, 
Arxiv (2020), \url{http://arxiv.org/abs/2005.10046}, submitted.
 
\bibitem{Mo} H.~L.~Montgomery, \emph{Topics in Multiplicative Number Theory}, 
Lecture Notes in Math. 227, Springer, Berlin, 1971.
 
\bibitem{PARI2020}
{The PARI~Group},  \emph{{PARI/GP version {\tt 2.11.4}}}, Bordeaux, 2020.
Available from \url{http://pari.math.u-bordeaux.fr/}.

\bibitem{Va} J.~Vaaler, \emph{Some extremal functions in Fourier analysis}, 
Bull Amer. Math. Soc. (N.S.), 12 (1985), no. 2, 183--216.
 
\end{thebibliography}
\end{document}